\newtheorem{theorem}{Theorem}
\newtheorem{lemma}[theorem]{Lemma}
\newtheorem{corollary}[theorem]{Corollary}
\newcommand{\cV}{\mathcal V}
\newcommand{\cS}{\mathcal S}
\newcommand{\cF}{\mathcal F}
\newcommand{\cG}{\mathcal G}
\newcommand{\fF}{\mathfrak F}
\newcommand{\PG}{{\mathrm{PG}}\,}
\newcommand{\PGL}{{\mathrm{PGL}}\,}
\newcommand{\Alt}{{\mathrm{Alt}}\,}
\newcommand{\FF}{{\mathbb F}}
\newcommand{\KK}{{\mathbb K}}
\newcommand{\Ui}{\overline{U_i}}
\newcommand{\Uj}{\overline{U_j}}
\newcommand{\Uz}{\overline{U_0}}
\newcommand{\Tr}{\mathrm{Tr}\,}
\newcommand{\Fix}{{\mathrm{Fix}\,}}
\newtheorem{proposition}[theorem]{Proposition}
\def\rif#1{(\ref{#1})}
\def\eqn#1$$#2$${\begin{equation}\label#1#2\end{equation}}
\begin{document}

\title{On some subvarieties of the Grassmann variety}

\author{L.Giuzzi, V.Pepe}

\maketitle
\begin{abstract}
Let $\cS$ be a Desarguesian $(t-1)$--spread of $\PG(rt-1,q)$, $\Pi$ a
$m$--dimensional subspace of $\PG(rt-1,q)$ and $\Lambda$ the linear
set consisting of the elements of $\cS$ with non--empty intersection
with $\Pi$. It is known that the Pl\"{u}cker embedding of the
elements of $\cS$ is a variety of $\PG(r^t-1,q)$, say $\cV_{rt}$. In
this paper, we describe the image under the Pl\"{u}cker embedding of
the elements of $\Lambda$ and we show that it is an $m$--dimensional
algebraic variety, projection of a Veronese variety of dimension $m$
and degree $t$, and it is a suitable linear section of $\cV_{rt}$.
\end{abstract}

\noindent
{\bf Keywords:} Grassmannian, linear set, Desarguesian spread,
Schubert variety. \\
{\bf MSC(2010):} 05B25, 14M15. 
\section{Introduction}
Let $V$ be a vector space
over a field $\FF$ and denote by $\PG(V,\FF)$ the usual projective geometry given by the lattice of subspaces of $V$.
If $\FF$ is the
finite field with $q$ elements $\FF_q$ and $\dim V=n$, then we
shall write, as customary, $\PG(n-1,q):=\PG(V,\FF_q)$.
Recall that if $\KK$ is a subfield of $\FF$ and  $[\FF:\KK]=t$
then $V$ is also endowed with
the structure of a vector space $\hat{V}$ of dimension $rt$ over $\KK$.
We shall denote by $\PG(V,\KK)$,
the projective geometry given by the lattice of the subspaces
of $V$ with $V$ is regarded as a vector space over $\KK$.

As each point of $\PG(V,\FF)$ corresponds to
a $(t-1)$--dimensional projective
subspace of $\PG(V,\KK)$, it is possible to represent the
projective space $\PG(V,\FF)$ as a subvariety $\cV_{rt}$
of the Grassmann manifold $\cG_{rt,t}$ of the
$t$--dimensional vector subspaces of $V$;
see \cite{V}.

A \emph{linear set} of $\PG(V,\FF)$ is a set of points  defined by an
additive subgroup of $V$. More in detail, let $\KK\leq\FF$, as above,
and suppose $W$ to be a vector space of dimension $m+1$ over
$\KK$. Then, the \emph{$\KK$--linear set} $\Lambda$
of $\PG(V,\FF)$ defined by $W$ consists
of all points of $\PG(V,\FF)$ of the form
\[ \Lambda=\{ \langle X\otimes\FF \rangle | X\in W \}. \]

Linear sets have been widely used to investigate
several different aspects of finite geometry, the two most remarkable
being blocking sets and finite semifields.
Following the approach pioneered by Schubert in
\cite{schubert}, it can be seen how the representation of subspaces
on the Grassmann manifold $\cG$ might provides an important tool for the study of their
behaviour and their intersections.

In the present paper, we are interested in the representation of a
$\KK$--linear set $\Lambda$ on $\cG$ and in determining the space of linear
equations defining it as linear section of $\cV_{rt}$.

Throughout this paper, when discussing Grassmannians we shall use
vector dimension for the spaces under consideration,
whereas we shall consider projective dimension
when discussing projective spaces.
As for algebraic varieties $V$ defined over a field $\FF$,
we shall always mean by \emph{dimension}
always mean the dimension of the variety
$\overline{\mathcal{V}}$, regarded over the algebraic closure
$\overline{\FF}$ of
$\FF$, defined by the same equations as $\mathcal V$.

\section{Grassmannians and Schubert varieties}

Fix an $n$ dimensional vector space $V=V_n(\FF)$ over $\FF$ and
write $G(n,k)$, $k<n$, for the set of all the $k$--subspaces of $V$.
It is well known that $G(n,k)$ is endowed with the structure
of a partial linear space and it can be embedded via the
Pl\"{u}cker map
\[ \varepsilon_{k}:\begin{cases}
  G(n,k) \rightarrow \bigwedge^k V \\
  W=\langle v_1,v_2,\ldots,v_k \rangle  \mapsto
  v_1 \wedge v_2 \wedge \cdots \wedge v_k
\end{cases}\]
in
the projective
space $\PG(\bigwedge^k V,\FF)$; here $\dim_{\FF}\bigwedge^k V = \binom{n}{k}$.
The image of $\varepsilon_{k}$, say $\cG_{nk}$, is an algebraic
variety of $\PG(\bigwedge^k V,\FF)$ whose
points correspond exactly to the totally decomposable
$1$--dimensional subspaces of $\bigwedge^k V$.

We now recall some basic properties of alternating multilinear
forms. Let $U$ be a vector
 space defined on $\FF$ and let $V^k:=\displaystyle\underbrace{V \times
   V \times \cdots \times V}_{k \text{ times}}$. A $k$--linear map $f:
 V^k \longrightarrow U$ is \emph{alternating} if $f(v_1,v_2,\ldots,v_k)=0$ when $v_i=v_j$ for some $i \neq j$.
 This implies that
 $\forall i,j \in \{1,2,\ldots,k\}$,
 $f(v_1,\ldots,v_i,\ldots,v_j,\ldots,v_k)=
 -f(v_1,\ldots,v_j,\ldots,v_i,\ldots,v_k)$.

\begin{theorem}[Universal property of the $k^{th}$ exterior power of a
  vector space, {\cite[Theorem 14.23]{ALA}}]
\label{ut}
 A map $f: V^k \longrightarrow U$ is alternating
  $k$--linear if, and only if, there is a
  linear map $\overline{f}:
  \bigwedge^k V \longrightarrow U$ with $\overline{f}(v_1 \wedge v_2
  \wedge \cdots \wedge v_k) = f(v_1,v_2,\ldots,v_k)$.
  The map $\overline{f}$ is uniquely determined.
\end{theorem}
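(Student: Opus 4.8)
The plan is to prove the two implications separately and then to settle uniqueness, working from the standard construction of $\bigwedge^k V$ as the quotient $T^k(V)/N$, where $T^k(V)=V^{\otimes k}$ is the $k$-th tensor power and $N$ is the subspace spanned by all decomposable tensors $v_1\otimes\cdots\otimes v_k$ in which $v_i=v_j$ for some $i\neq j$. Under this construction one has $v_1\wedge\cdots\wedge v_k=(v_1\otimes\cdots\otimes v_k)+N$, and the canonical map $\pi\colon V^k\to\bigwedge^k V$, $(v_1,\ldots,v_k)\mapsto v_1\wedge\cdots\wedge v_k$, is alternating $k$-linear precisely by the definition of $N$. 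Everything below reduces the claim to properties of $\pi$ and of the tensor power, and the argument adapts to any construction realizing $\bigwedge^k V$ as a quotient of $T^k(V)$ by this relation.

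For the backward implication, suppose a linear map $\overline f$ as in the statement exists. Then $f=\overline f\circ\pi$, since $\overline f(\pi(v_1,\ldots,v_k))=\overline f(v_1\wedge\cdots\wedge v_k)=f(v_1,\ldots,v_k)$. As $\pi$ is alternating $k$-linear and $\overline f$ is linear, the composite $f$ is again alternating $k$-linear: multilinearity is preserved under postcomposition with a linear map, and the vanishing condition defining \emph{alternating} is inherited directly from $\pi$. This direction requires no hypothesis beyond the existence of $\overline f$.

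For the forward implication, assume $f$ is alternating $k$-linear. First I would invoke the universal property of the tensor power: the $k$-linear map $f$ factors uniquely as $f=\tilde f\circ\tau$ through a linear map $\tilde f\colon T^k(V)\to U$ satisfying $\tilde f(v_1\otimes\cdots\otimes v_k)=f(v_1,\ldots,v_k)$, where $\tau\colon V^k\to T^k(V)$ is the canonical multilinear map. The substantive step is then to verify that $\tilde f$ annihilates the subspace $N$: on a spanning tensor $v_1\otimes\cdots\otimes v_k$ with $v_i=v_j$ one has $\tilde f(v_1\otimes\cdots\otimes v_k)=f(v_1,\ldots,v_k)=0$, exactly because $f$ is alternating. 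Since $\tilde f$ vanishes on a spanning set of $N$, it vanishes on all of $N$ and hence descends to a well-defined linear map $\overline f\colon \bigwedge^k V\to U$ with $\overline f(v_1\wedge\cdots\wedge v_k)=f(v_1,\ldots,v_k)$. This factorization through $N$ is the only genuine obstacle in the proof; it is the single point at which the alternating hypothesis is consumed, and confirming that this hypothesis is exactly strong enough to force $\tilde f|_N=0$ is the crux of the argument.

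Finally, uniqueness is immediate: the decomposable elements $v_1\wedge\cdots\wedge v_k$ span $\bigwedge^k V$, and the prescribed identity fixes the value of $\overline f$ on each such generator, so a linear map satisfying it is determined on a spanning set and is therefore unique. If one wishes to bypass the tensor-power universal property altogether, the same conclusion follows by a basis computation: fixing a basis of $V$, the wedges indexed by strictly increasing $k$-tuples of basis vectors form a basis of $\bigwedge^k V$, and defining $\overline f$ on that basis by the required formula and expanding in coordinates recovers $f=\overline f\circ\pi$ by multilinearity and antisymmetry.
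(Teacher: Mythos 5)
Your proof is correct and is the standard argument; note that the paper offers no proof of its own here, quoting the result directly from Roman's \emph{Advanced Linear Algebra} (Theorem 14.23), where essentially the same construction of $\bigwedge^k V$ as a quotient of the tensor power is used. Your factorization through $T^k(V)$, the verification that the alternating hypothesis kills the defining subspace $N$, and the uniqueness via spanning decomposables are exactly the expected steps, so there is nothing to add.
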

\begin{corollary}
The $\FF$--vector space
\[ \Alt^k(V,U):=\{f: V^k \longrightarrow U| f \text{ is $k$--linear and alternating}\} \]
is isomorphic to the $\FF$--vector space
$\mathrm{Hom}(\bigwedge^kV,U)$.
\end{corollary}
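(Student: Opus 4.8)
The plan is to exhibit the isomorphism directly from Theorem \ref{ut}. First I would define a map
\[ \Phi: \Alt^k(V,U) \longrightarrow \mathrm{Hom}\Bigl(\bigwedge^k V, U\Bigr), \qquad \Phi(f) = \overline{f}, \]
where $\overline{f}$ is the linear map associated to the alternating $k$--linear form $f$ by the universal property. Theorem \ref{ut} guarantees both that $\overline{f}$ exists and that it is uniquely determined, so $\Phi$ is well defined.

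Next I would verify that $\Phi$ is $\FF$--linear. Given $f, g \in \Alt^k(V,U)$ and scalars $a, b \in \FF$, the map $a\overline{f} + b\overline{g}$ is linear and sends $v_1 \wedge \cdots \wedge v_k$ to $af(v_1,\ldots,v_k) + bg(v_1,\ldots,v_k) = (af+bg)(v_1,\ldots,v_k)$; by the uniqueness clause of Theorem \ref{ut} it must coincide with $\overline{af+bg}$, whence $\Phi(af+bg) = a\Phi(f) + b\Phi(g)$.

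To finish I would construct the inverse. For a linear map $g \in \mathrm{Hom}(\bigwedge^k V, U)$ I set $\Psi(g)(v_1,\ldots,v_k) := g(v_1 \wedge \cdots \wedge v_k)$. The only step that is not entirely formal is checking that $\Psi(g)$ really lands in $\Alt^k(V,U)$: its $k$--linearity, and its vanishing whenever two arguments coincide, are inherited from the corresponding properties of the wedge map $(v_1,\ldots,v_k) \mapsto v_1 \wedge \cdots \wedge v_k$. Then $\Phi$ and $\Psi$ are mutually inverse: one has $\Phi(\Psi(g)) = g$ because $\overline{\Psi(g)}$ agrees with $g$ on all decomposable elements $v_1 \wedge \cdots \wedge v_k$, which span $\bigwedge^k V$, while $\Psi(\Phi(f)) = f$ follows directly from the defining identity $\overline{f}(v_1 \wedge \cdots \wedge v_k) = f(v_1,\ldots,v_k)$. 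I do not anticipate any genuine obstacle here; the entire content is carried by Theorem \ref{ut}, and the argument is the standard adjunction specialised to the alternating setting.
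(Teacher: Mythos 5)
Your argument is correct and is exactly the route the paper intends: the corollary is stated as an immediate consequence of Theorem \ref{ut} with no written proof, and your $\Phi$, $\Psi$ construction simply makes explicit the standard bijection (and its linearity) that the universal property provides. Nothing is missing.
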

In particular, let
$(\bigwedge^k V)'$ be the dual of $\bigwedge^k V$. Then,
$(\bigwedge^k V)'\simeq \Alt^k(V,\FF)$.
 Furthermore,
we also have
$(\bigwedge^k V)' \simeq \bigwedge^{n-k}V$.
Actually,  $(\bigwedge^k V)'$
is spanned by  linear maps of type acting on the pure vectors of
$\bigwedge^k V$ as
\[
v_1 \wedge v_2 \wedge \cdots \wedge v_k \mapsto v_1 \wedge v_2 \wedge \cdots \wedge v_k \wedge w_{k+1} \wedge \cdots \wedge w_n,
\]
and extended by linearity; see \cite[Chapter 5]{Gr} for more details.
 Here
$(w_{k+1},w_{k+2},\ldots,w_n)\in V^{n-k}$ is a fixed $(n-k)$--ple.

Let $F=A_1< A_2<\cdots< A_k$ be a proper flag consisting of $k$
subspaces of $V$.
The\-  \emph{Schubert variety}\- $\Omega(F)=\Omega(A_1,A_2,\ldots,A_k)$
induced by $F$ is the subvariety of $\cG_{nk}$ corresponding to
all $W\in G(n,k)$ such that $\dim W\cap A_i\geq i$ for all $i=1,\ldots,k$.
It is well known, see \cite[Corollary 5]{Kleiman_Laksov} and
\cite[Chapter XIV]{HP}, that a
Schubert variety is actually a linear section of the Grassmannian.
Furthermore, as the general linear group is flag--transitive,
all Schubert varieties defined by flags of the same kind,
i.e. with the same list of dimensions $a_i=\dim A_i$, turn out to be
projectively equivalent.

In the present work we shall be mostly concerned with Schubert varieties
of a very specific form, namely those for which $a_1=h\leq n-k$ and
$a_i=n-k+i$ for any $i=2,\ldots,k$.
Under these assumptions, $\Omega(A_1):=\Omega(A_1,A_2,\ldots,A_k)$
depends only on $A_1$ and corresponds to
the set of all $k$--subspaces with non--trivial intersection with
$A_1$.
Indeed, using once more \cite[\S 2, Corollary 5]{Kleiman_Laksov},
we see that $\Omega(A_1)$ is
the complete intersection of $\cG_{nk}$ with a linear subspace of
codimension $\binom{n-h}{k}$, meaning that the
subspace of the dual of $\bigwedge^k V$ of
the elements vanishing on $\Omega(A_1)$ has dimension $\binom{n-h}{k}$.

Using Theorem \ref{ut} we can provide
a description of the space of the linear maps vanishing on $\Omega(A_1)$.
For any $k$--linear map $f:V^k\to U$,
define the \emph{kernel} of $f$ as
\[ \ker f=\{w
\in V | f(w,v_2,\ldots,v_k)=0,  \forall v_i \in V \}. \]
It is straightforward to see that $\ker f$ is a subspace of $V$;
when $f$ is alternating and non--null,
 the dimension of $\ker f$ is trivially bounded from
above, as recalled by the following proposition.
\begin{proposition}
\label{p2}
The kernel of a non--null $k$--linear alternating
map $f$ of an $n$--dimensional vector space $V$ has dimension at most $n-k$.
\end{proposition}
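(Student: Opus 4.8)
The plan is to argue by contradiction: assuming $\dim\ker f\geq n-k+1$, I would show that $f$ is forced to be the null map, contrary to hypothesis. The key preliminary observation is that, although $\ker f$ is defined through the \emph{first} argument of $f$, the alternating property propagates the vanishing to every slot. Indeed, if $w\in\ker f$ then $f(w,v_2,\ldots,v_k)=0$ for all $v_i$, and since transposing two arguments only changes the sign of $f$, one gets $f(v_1,\ldots,v_{i-1},w,v_{i+1},\ldots,v_k)=0$ for every position $i$ and every choice of the remaining vectors. Thus a kernel vector occurring in \emph{any} argument annihilates $f$.

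Next I would choose a basis adapted to the kernel: pick a basis $w_1,\ldots,w_d$ of $K=\ker f$, where $d=\dim K$, and complete it to a basis $w_1,\ldots,w_d,u_1,\ldots,u_{n-d}$ of $V$. Expanding $f(v_1,\ldots,v_k)$ by multilinearity writes it as a sum of evaluations $f(b_{i_1},\ldots,b_{i_k})$ on basis vectors $b_{i_j}$. By the observation above, every summand in which some $b_{i_j}$ is one of the $w$'s vanishes; and by the alternating property every summand in which two arguments coincide vanishes as well. Hence the only possibly nonzero evaluations are those of the form $f(u_{j_1},\ldots,u_{j_k})$ with $j_1,\ldots,j_k$ pairwise distinct indices among $1,\ldots,n-d$. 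Such a $k$-tuple of distinct indices exists only if $n-d\geq k$; and since $f$ is non-null, at least one such evaluation must be nonzero, forcing $n-d\geq k$, that is, $\dim\ker f=d\leq n-k$.

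As for difficulties, the argument is essentially bookkeeping once the adapted basis is fixed, so there is no serious obstacle; the one point that genuinely requires the alternating hypothesis, rather than mere multilinearity, is the propagation of the kernel condition from the first slot to all slots, together with the elimination of the repeated-argument terms. It is worth recording en route that non-nullity already forces $n\geq k$: any $k$ vectors on which $f$ does not vanish must be linearly independent, since a linear dependence would, after expansion, express $f$ as a combination of evaluations each having a repeated argument, all of which vanish. This is consistent with, and subsumed by, the final bound $\dim\ker f\leq n-k$.
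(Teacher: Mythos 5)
Your proof is correct, but it follows a genuinely different route from the paper's. You pick a basis of $\ker f$, complete it to a basis of $V$, and expand by multilinearity: the alternating property propagates the kernel condition from the first slot to every slot, every term containing a kernel basis vector or a repeated argument dies, and the surviving terms require $k$ pairwise distinct basis vectors outside the kernel, so non-nullity forces $n-\dim\ker f\geq k$. The paper instead uses the universal property of the exterior power (Theorem \ref{ut}) to regard $f$ as a linear functional on $\bigwedge^k V$, so that $f(v_1,\ldots,v_k)$ depends only on $v_1\wedge\cdots\wedge v_k$ and hence vanishes whenever $E=\langle v_1,\ldots,v_k\rangle$ satisfies $\dim E<k$ or $E\cap\ker f\neq 0$; if $\dim\ker f>n-k$, the dimension formula forces every $k$--dimensional $E$ to meet $\ker f$ non--trivially, giving $f\equiv 0$. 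Your version is more elementary and self--contained, and it makes explicit a point the paper leaves implicit (why a $k$--tuple whose span meets the kernel is annihilated); the paper's version is shorter and matches the Grassmannian viewpoint used in the sequel, where linear forms on $\bigwedge^k V$ are identified with hyperplane sections cutting out the subspaces that meet a fixed subspace.
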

\begin{proof}
By Theorem \ref{ut}, $f$ can be
regarded as a linear functional
$\overline{f}:\wedge^k V\to\FF$
where
\[ f(v_1,\ldots,v_k)=\overline{f}(v_1\wedge v_2 \ldots\wedge v_k). \]
Let $E=\langle v_1,\ldots,v_k\rangle$ and observe that
$f(E):=f(v_1,\ldots,v_k)=0$ when $\dim E<k$ or $\dim E\cap\ker f>0$
In particular, if $\dim\ker f>n-k$ we always have
$\dim E\cap\ker f>0$ for $\dim E \geq k$; this gives $f\equiv 0$.
\end{proof}

\begin{proposition}\label{kernel}
The subspace of $(\bigwedge^k V)'$
consisting of the linear forms  vanishing on
$\Omega(A_1)$ is isomorphic to the
subspace of the $k$--linear alternating maps whose kernel contains $A_1$.
In particular, if $h=\dim A_1\leq n-k$, then there exists a basis for this
subspace consisting
of maps whose kernel contains $A_1$ and has dimension $n-k$.
\end{proposition}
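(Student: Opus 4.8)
The plan is to work throughout via the identification $(\bigwedge^k V)'\simeq\Alt^k(V,\FF)$ furnished by Theorem~\ref{ut} and its Corollary, under which a linear form $\overline{f}$ on $\bigwedge^k V$ corresponds to the alternating map $f(v_1,\ldots,v_k)=\overline{f}(v_1\wedge\cdots\wedge v_k)$, and to establish the two assertions in turn. The isomorphism claimed in the first assertion amounts to showing that $\overline{f}$ vanishes on $\Omega(A_1)$ if and only if $A_1\subseteq\ker f$.

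For the forward direction, take $W\in\Omega(A_1)$ and pick a nonzero $w\in W\cap A_1$; extending $w$ to a basis $w,v_2,\ldots,v_k$ of $W$, the Pl\"ucker image $\varepsilon_k(W)$ is a scalar multiple of $w\wedge v_2\wedge\cdots\wedge v_k$, and $f(w,v_2,\ldots,v_k)=0$ since $w\in A_1\subseteq\ker f$. For the converse, fix a nonzero $w\in A_1$ and arbitrary $v_2,\ldots,v_k\in V$: if $w,v_2,\ldots,v_k$ are linearly dependent then $f(w,v_2,\ldots,v_k)=0$ because $f$ is alternating, while if they are independent then $W=\langle w,v_2,\ldots,v_k\rangle\in\Omega(A_1)$, since it meets $A_1$ in $\langle w\rangle$, so again $f(w,v_2,\ldots,v_k)=\overline{f}(\varepsilon_k(W))=0$. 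Hence $w\in\ker f$, and as $w$ was an arbitrary element of $A_1$, we conclude $A_1\subseteq\ker f$.

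For the second assertion I would pass to the further identification $(\bigwedge^k V)'\simeq\bigwedge^{n-k}V$ recalled before Proposition~\ref{p2}, under which a decomposable $\omega=w_{k+1}\wedge\cdots\wedge w_n$ corresponds to the alternating form $f_\omega(v_1,\ldots,v_k)=v_1\wedge\cdots\wedge v_k\wedge\omega\in\bigwedge^n V\simeq\FF$. The crucial computation is that $\ker f_\omega=\{v\in V: v\wedge\omega=0\}$: the inclusion $\supseteq$ is immediate, and the reverse inclusion follows by writing $f_\omega(v,v_2,\ldots,v_k)=\pm(v\wedge\omega)\wedge(v_2\wedge\cdots\wedge v_k)$ and invoking the non--degeneracy of the wedge pairing $\bigwedge^{n-k+1}V\times\bigwedge^{k-1}V\to\bigwedge^n V$. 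In particular, when the $w_j$ are independent, $\ker f_\omega=\langle w_{k+1},\ldots,w_n\rangle$ has dimension exactly $n-k$, the maximum allowed by Proposition~\ref{p2}, and the condition $A_1\subseteq\ker f_\omega$ becomes simply $A_1\subseteq\langle w_{k+1},\ldots,w_n\rangle$.

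To finish, choose a basis $e_1,\ldots,e_n$ of $V$ with $A_1=\langle e_1,\ldots,e_h\rangle$ (possible since $h\leq n-k\leq n$). Expanding $\omega=\sum_I c_I\,e_I$ over the $(n-k)$--subsets $I$ of $\{1,\ldots,n\}$, the requirement $e_i\wedge\omega=0$ for $i=1,\ldots,h$ forces $c_I=0$ whenever $I\not\supseteq\{1,\ldots,h\}$; so by the first assertion the subspace in question is spanned by the monomials $e_I$ with $I\supseteq\{1,\ldots,h\}$, of which there are exactly $\binom{n-h}{k}$, matching the codimension computed earlier. These monomials are linearly independent, hence form a basis, and each $e_I$ is decomposable with $\ker f_{e_I}=\langle e_i: i\in I\rangle$ of dimension $n-k$ and containing $A_1$. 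I expect the main obstacle to be the kernel identity $\ker f_\omega=\{v:v\wedge\omega=0\}$ together with checking that the resulting spanning set has the predicted size $\binom{n-h}{k}$; once the non--degeneracy of the wedge pairing is invoked cleanly, the remainder is bookkeeping.
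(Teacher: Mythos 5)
Your proof is correct, and its backbone is the same as the paper's: identify $(\bigwedge^k V)'$ with $\bigwedge^{n-k}V$ and realize the vanishing forms by decomposable elements $w_{k+1}\wedge\cdots\wedge w_n$ whose associated alternating map has kernel exactly $\langle w_{k+1},\ldots,w_n\rangle\supseteq A_1$. The one genuine difference is in how the dimension count is closed. The paper shows that these candidate maps span a space of dimension $\binom{n-h}{k}$ (the ``well known'' dimension of the span of the Pl\"ucker embedding of the $(n-k)$--spaces through $A_1$) and then matches this against the codimension of $\Omega(A_1)$ quoted from Kleiman--Laksov, concluding by equality of dimensions. You instead prove the first assertion as a genuine two-way equivalence (the paper only writes out the direction ``vanishing on $\Omega(A_1)$ implies $A_1\leq\ker f$''), establish the kernel identity $\ker f_\omega=\{v: v\wedge\omega=0\}$ via non-degeneracy of the wedge pairing, and then compute in an adapted basis that the space of admissible $\omega$ is exactly the span of the $\binom{n-h}{k}$ monomials $e_I$ with $I\supseteq\{1,\ldots,h\}$. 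This buys you a self-contained argument that does not invoke the Kleiman--Laksov codimension formula at all --- indeed it re-derives that codimension for this special class of Schubert varieties --- at the cost of a slightly longer coordinate computation. All the individual steps (the equivalence, the kernel identity, the count $\binom{n-h}{n-k-h}=\binom{n-h}{k}$) check out.
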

\begin{proof}
  Let $f: \bigwedge^k V \to \FF$ be a linear function vanishing
  on $\Omega(A_1)$. In particular, $f$ vanishes on all subspaces $E$ with
  $\dim E\cap A_1>0$. Thus, by the definition of kernel, $A_1\leq\ker f$.
  If $h>n-k$, then by Proposition \ref{p2}
  the only function vanishing on $\Omega(A_1)$
  is $f\equiv 0$ and there is nothing to prove.
  Let now $h \leq n-k$. By $(\bigwedge^k V)' \simeq \bigwedge^{n-k} V$, let us
   consider the linear maps:
   \[
v_1 \wedge v_2 \wedge \cdots \wedge v_k \mapsto v_1 \wedge v_2 \wedge \cdots \wedge v_k \wedge w_{k+1} \wedge \cdots \wedge w_n \]
   where $\{w_{k+1},w_{k+2},\ldots,w_n \}$ is a set of $n-k$ linearly
independent vectors such that $A_1 \leq \langle
w_{k+1},w_{k+2},\ldots,w_n \rangle$. The kernel of such a map is the
subspace $\langle w_{k+1},w_{k+2},\ldots,w_n \rangle$.
It is well known that the dimension
of the Pl\"{u}cker embedding of the $(n-k)$--subspaces containing a
fixed $h$--dimensional subspace is $\binom{n-h}{k}$.
As, by \cite[\S 2, Corollary 5]{Kleiman_Laksov}
this is also the dimension of the space of the linear functions vanishing on $\Omega(A_1)$, we have the
aforementioned linear maps can be used to also determine a basis for it.
\end{proof}

\section{Desarguesian spreads and linear sets}

 A $(t-1)$--\emph{spread} $\cS$ of $\PG(V,\FF)$ is a partition of the
 point-set of $\PG(V,\FF)$
 in subspaces of fixed projective dimension $t-1$. It is well known, see
 \cite{Segre}, that spreads exist if and only if $t|n$. Henceforth, let
 $n=rt$ and denote by $V_1$ a $\FF$--vector space such that
 $\dim_{\FF}V_1=n+1$ and $V < V_1$. Under these
 assumptions we can embed $\PG(V,\FF)$ as a
 hyperplane in $\PG(V_1,\FF)$. Consider
 the point--line geometry
 $A(\cS)$ whose points
 are the points of $\PG(V_1,\FF)$ not contained in $\PG(V,\FF)$ and whose
 lines of are the subspaces of $\PG(V_1,\FF)$
 intersecting $\PG(V,\FF)$ in exactly one spread element.
 We say that
 $\cS$ is a Desarguesian spread if $A(\cS)$ is a Desarguesian
 affine space.
 Here we shall focus on spaces defined over finite fields.
 We recall that, up to projective equivalence,
 Desarguesian spreads are unique and their
 automorphism group contains a copy of $\PGL(r,q^t)$.
 There are basically two main ways
 to represent a Desarguesian spread.

 Let $V:=V(r,q^t)$ be the standard $r$--dimensional vector space over
 $\FF_{q^t}$ and write $\PG(r-1,q^t)=\PG(V,q^t)$.
 When we regard $V$ as an $\FF_q$--vector space,
 $\dim_{\FF_q}V(r,q^t)=rt$; hence,
 $\PG(V,q)$ corresponds to  $\PG(rt-1,q)$;
 furthermore, a point $\langle (x_0,x_1,\ldots,x_{r-1}) \rangle$ of
 $\PG(r-1,q^t)$ corresponds to the $(t-1)$--dimensional subspace of
 $\PG(rt-1,q)$ given by $\{\lambda (x_0,x_1,\ldots,x_{r-1}), \lambda
\in \FF_{q^t}\}$. This is the so called the \emph{
  $\FF_q$--linear representation} of
$\langle (x_0,x_1,\ldots,x_{r-1}) \rangle$. The set $\cS$,
consisting of the $(t-1)$--dimensional subspaces of $\PG(rt-1,q)$ that are the linear representation of a point of $\PG(r-1,q^t)$, is
a partition of the point set of $\PG(rt-1,q)$ and it is the
$\FF_q$--linear representation of $\PG(r-1,q^t)$.

\begin{theorem}[\cite{BarlCof}]
The $\FF_q$--linear representation of $\PG(r-1,q^t)$ is a Desarguesian
spread of $\PG(rt-1,q)$ and conversely.
\end{theorem}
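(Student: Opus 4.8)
The statement is an equivalence, so the plan is to prove the two implications separately; I expect the direction \emph{linear representation $\Rightarrow$ Desarguesian spread} to contain the real content, while the converse will follow almost formally. First I would check that $\cS$ really is a $(t-1)$--spread: every nonzero vector of $V=V(r,q^t)$ lies in exactly one $1$--dimensional $\FF_{q^t}$--subspace $\langle w\rangle_{\FF_{q^t}}$, and each such subspace is a $t$--dimensional $\FF_q$--subspace, so the associated $(t-1)$--dimensional projective subspaces partition the points of $\PG(rt-1,q)$. To show that $\cS$ is Desarguesian I would then identify the André--Bruck--Bose space $A(\cS)$ with $\AG(r,q^t)$. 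Concretely, I fix the embedding $V_1=V\oplus\langle e_0\rangle_{\FF_q}$, so that $\PG(V,\FF_q)$ is the hyperplane at infinity $\Pi_\infty$ of $\PG(V_1,\FF_q)=\PG(rt,q)$, and I send each affine point $\langle e_0+v\rangle_{\FF_q}$ to the vector $v\in V=\FF_{q^t}^r$. This is a bijection between the point set of $A(\cS)$ and that of $\AG(r,q^t)$.

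The crux of the forward direction is to verify that this bijection carries lines to lines. A line of $A(\cS)$ is the join of an affine point $\langle e_0+v\rangle$ with a spread element $S=\langle w\rangle_{\FF_{q^t}}$; its affine points are the $\langle e_0+v+s\rangle$ with $s\in S$, and these map to the coset $v+\langle w\rangle_{\FF_{q^t}}$, which is precisely a $1$--dimensional affine subspace of $\AG(r,q^t)$. Since for $r\geq 2$ an affine space is determined by its point--line incidence, the bijection is an isomorphism of affine spaces, whence $A(\cS)\cong\AG(r,q^t)$ is Desarguesian and $\cS$ is a Desarguesian spread. I would also record, for later use, that under this isomorphism the parallel class of the spread element $\langle w\rangle_{\FF_{q^t}}$ is exactly the direction $\langle w\rangle\in\PG(r-1,q^t)$, so that $\cS$ is identified with the set of directions of $\AG(r,q^t)$.

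For the converse, let $\cS$ be an arbitrary Desarguesian spread of $\PG(rt-1,q)$. By definition $A(\cS)$ is a Desarguesian affine space. A quick count fixes its parameters: it has $q^{rt}$ affine points, while each of its lines is a $t$--dimensional subspace of $\PG(rt,q)$ meeting $\Pi_\infty$ in a $(t-1)$--dimensional spread element and hence carrying exactly $q^t$ affine points. Writing $A(\cS)\cong\AG(d,s)$ and matching $s=q^t$ with $s^{d}=q^{rt}$ forces $d=r$, so $A(\cS)\cong\AG(r,q^t)$. The cleanest way to finish is to invoke the uniqueness of Desarguesian spreads up to projective equivalence recalled above: the forward direction exhibits one Desarguesian spread as a linear representation, and since being a linear representation is preserved by collineations of $\PG(rt-1,q)$, every Desarguesian spread is the linear representation of $\PG(r-1,q^t)$ in suitable coordinates. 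Alternatively, one reconstructs the $\FF_{q^t}$--structure on $\Pi_\infty$ directly by transporting it through the isomorphism $A(\cS)\cong\AG(r,q^t)$ and checking it recovers $\cS$.

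The one genuinely delicate point is the identification in the forward direction: one must choose the embedding of $\PG(rt-1,q)$ into $\PG(rt,q)$ so that the affine point set inherits the $\FF_q$--linear, and through $\cS$ the $\FF_{q^t}$--linear, structure of $V$, and then confirm that the $\FF_q$--joins defining the André--Bruck--Bose lines coincide with the cosets of the $1$--dimensional $\FF_{q^t}$--subspaces. Once this is set up correctly, the remainder is bookkeeping with dimensions together with the standard fact that an affine space of rank at least two is determined by its point--line incidence.
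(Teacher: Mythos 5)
The paper offers no proof of this statement: it is quoted directly from Barlotti--Cofman, so there is nothing internal to compare against. Your argument is the standard Andr\'e--Bruck--Bose-style proof and is essentially sound. The forward direction is complete: the partition property, the coordinatization of the affine points by $v\in V$, and the computation that the affine part of the join of $\langle e_0+v\rangle$ with a spread element is exactly the coset $v+\langle w\rangle_{\FF_{q^t}}$ together give an isomorphism of incidence structures $A(\cS)\cong\AG(r,q^t)$, which is what ``Desarguesian spread'' means here. The dimension count in the converse ($q^t$ points per line, $q^{rt}$ points in all, hence $A(\cS)\cong\AG(r,q^t)$) is also correct.

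The one soft spot is your preferred way of finishing the converse. Invoking ``Desarguesian spreads are unique up to projective equivalence'' is defensible inside this paper, since that fact is recalled just before the theorem; but as a self-contained proof of the Barlotti--Cofman result it is close to circular, because that uniqueness statement is classically \emph{derived from} this very theorem (one proves every Desarguesian spread is a linear representation, and then uniqueness follows from $\PGL(rt,q)$ acting transitively on the relevant data). Moreover, even granting uniqueness, you still need the observation you only state in passing: that the image of a linear representation under a collineation of $\PG(rt-1,q)$ is again a linear representation (with respect to a transported $\FF_{q^t}$-structure on $V$). The honest route is the second one you mention --- fix an origin, identify the affine points of $A(\cS)$ with $V$, transport the $\FF_{q^t}$-vector-space structure of $\AG(r,q^t)$ through the isomorphism, and check that the lines through the origin (equivalently, the parallel classes, equivalently the spread elements) become the $1$-dimensional $\FF_{q^t}$-subspaces. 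That step uses the fundamental theorem of affine geometry (an isomorphism fixing the origin is semilinear over the prime field) and deserves to be written out rather than waved at; once it is, the proof is complete.
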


Throughout this paper we shall extensively
use the following result: if $\sigma$
is a $\FF_q$--linear collineation of $\PG(n-1,q^t)$ of order $t$, then the
subset $\Fix(\sigma)$ of all elements of $\PG(n-1,q^t)$ point--wise
fixed by $\sigma$ is a subgeometry isomorphic to $\PG(n-1,q)$.
This is a straightforward consequence of the fact that there is just one
conjugacy class of $\FF_q$--linear collineations of order $t$ in
$P\Gamma L(n,q^t)$, namely that of $\mu: X\to X^q$.
In particular,
all subgeometries $\PG(n-1,q)$ are projectively equivalent to the
set of fixed points of the map
 $(x_0,x_1,\ldots,x_{n-1}) \mapsto (x_0^q,x_1^q,\ldots,x_{n-1}^q)$.

\begin{lemma}\label{subspace}\cite[Lemma 1]{LunardonNS}
Let $\Sigma \simeq \PG(n-1,q)$ be a subgeometry of $\PG(n-1,q^t)$ and
let $\sigma$ be the $\FF_q$--linear collineation of order $t$ such
that $\Sigma=\Fix(\sigma)$. Then a subspace $\Pi$ of $\PG(n-1,q^t)$ is
fixed set--wise by $\sigma$ if and only if
$\Pi \cap \Sigma$ has the
same projective dimension as $\Pi$.
\end{lemma}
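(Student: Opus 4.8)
The plan is to translate the statement into linear algebra over the extension $\FF_{q^t}/\FF_q$ and then to settle it by an additive Hilbert~90 (Galois descent) argument. By the remark preceding the lemma, any $\FF_q$--linear collineation of order $t$ is conjugate to the coordinate-wise Frobenius, so I may assume $\sigma$ is induced by the semilinear map $\mu\colon X\mapsto X^q$ acting on $V=\FF_{q^t}^n$, with $\Sigma=\Fix(\mu)$ having underlying $\FF_q$--space $V_0=\FF_q^n$. Let $W\le V$ be the $\FF_{q^t}$--subspace with $\Pi=\PG(W,\FF_{q^t})$. Then ``$\Pi$ is fixed set-wise by $\sigma$'' reads $\mu(W)=W$; the points of $\Pi\cap\Sigma$ are exactly the $\FF_q$--rational points spanned by the fixed subspace $W\cap V_0=\{w\in W:\mu(w)=w\}$, so $\Pi\cap\Sigma=\PG(W\cap V_0,\FF_q)$ has projective dimension $\dim_{\FF_q}(W\cap V_0)-1$, whereas $\Pi$ has projective dimension $\dim_{\FF_{q^t}}W-1$. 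I shall also use repeatedly that $\FF_q$--independent vectors of $V_0$ remain $\FF_{q^t}$--independent in $V$ (extract a square invertible submatrix with entries in $\FF_q$). With these translations the lemma becomes the equivalence
\[ \mu(W)=W \iff \dim_{\FF_q}(W\cap V_0)=\dim_{\FF_{q^t}}W. \]

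For the easy implication I would argue as follows. Put $W_0:=W\cap V_0$, an $\FF_q$--subspace of fixed vectors. Its $\FF_{q^t}$--span $\langle W_0\rangle$ is always contained in $W$ and is $\mu$--invariant, since $\mu$ fixes $W_0$ pointwise and acts semilinearly; moreover $\dim_{\FF_{q^t}}\langle W_0\rangle=\dim_{\FF_q}W_0$ by the independence remark above. Hence if $\dim_{\FF_q}W_0=\dim_{\FF_{q^t}}W$, the inclusion $\langle W_0\rangle\le W$ is an equality and $W$ is $\mu$--invariant.

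The hard part is the converse: showing that an invariant $W$ is spanned over $\FF_{q^t}$ by its fixed vectors. My plan here is the standard averaging/Moore-matrix trick. For $w\in W$ and $\lambda\in\FF_{q^t}$ set $s_\lambda:=\sum_{j=0}^{t-1}\lambda^{q^j}\mu^j(w)$, which lies in $W$ because $W$ is $\mu$--invariant; a one-line reindexing using $\mu^t=\mathrm{id}$ and $\lambda^{q^t}=\lambda$ gives $\mu(s_\lambda)=s_\lambda$, so in fact $s_\lambda\in W_0$. Choosing an $\FF_q$--basis $\lambda_1,\dots,\lambda_t$ of $\FF_{q^t}$, the coefficient matrix $(\lambda_i^{q^j})_{i,j}$ is a Moore matrix, whose determinant is nonzero precisely because the $\lambda_i$ are $\FF_q$--independent; inverting it expresses $w=\mu^0(w)$ as an $\FF_{q^t}$--combination of the $s_{\lambda_i}\in W_0$. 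Thus $w\in\langle W_0\rangle$ for every $w\in W$, i.e. $W=\langle W_0\rangle$, whence $\dim_{\FF_q}W_0=\dim_{\FF_{q^t}}W$.

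I expect the genuine obstacle to be exactly this converse direction: the equality of dimensions is the additive form of Hilbert's Theorem~90, and the crux is the nonvanishing of the Moore determinant (equivalently, the linear independence of $\mu^0,\dots,\mu^{t-1}$). Everything else is bookkeeping with the identification $\Pi\leftrightarrow W$ and the passage between $\FF_q$-- and $\FF_{q^t}$--dimensions.
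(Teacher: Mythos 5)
The paper does not prove this lemma at all: it is quoted verbatim from Lunardon's \emph{Normal Spreads} (\cite[Lemma 1]{LunardonNS}), so there is no in-paper argument to compare against. Your proposal is a correct, self-contained proof. The reduction to $\sigma=\mu\colon X\mapsto X^q$ is licensed by the remark the paper makes just before the lemma (uniqueness of the conjugacy class of $\FF_q$--linear collineations of order $t$), the easy direction is exactly the observation that the $\FF_{q^t}$--span of $W\cap V_0$ is $\mu$--invariant and has the right dimension, and the converse is the standard Galois-descent computation: the trace-like vectors $s_\lambda=\sum_{j}\lambda^{q^j}\mu^j(w)$ land in $W\cap V_0$, and the nonvanishing of the Moore determinant for an $\FF_q$--basis $\lambda_1,\dots,\lambda_t$ recovers $w$ from them. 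That is precisely the crux, and your identification of it as additive Hilbert~90 is apt. The only step you pass over silently is the identification $\Fix(\sigma)=\PG(V_0,\FF_q)$ itself, i.e.\ that a point with $v^q=\lambda v$ can be rescaled to a genuinely $\FF_q$--rational vector; this needs multiplicative Hilbert~90 (from $N(\lambda)=1$ write $\lambda=\alpha/\alpha^{q}$ and replace $v$ by $\alpha v$), but the paper already assumes this identification in the paragraph preceding the lemma, so it is fair to take it as given.
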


Take now $V$ to be a $rt$--dimensional projective space over
$\FF_{q^t}$ and let $U_i$ be the subspace of $V$ defined by the equations
$x_j=0$, $\forall j \notin \{ir+1,ir+2,\ldots,(i+1)r\}$.
Then, clearly, $V=U_0\oplus U_1 \oplus \cdots \oplus U_{t-1}$.

For any $(x_0,\ldots,x_{rt-1}) \in V$,
write $\mathbf{x}^{(i)}=x_{ir},\ldots, x_{(i+1)r-1}$, and consider
the $\FF_q$--linear collineation of $\PG(rt-1,q^t)$ of order $t$ given
by
\[ \sigma:
(\mathbf{x}^{(0)},\mathbf{x}^{(1)},\ldots,\mathbf{x}^{(t-1)}) \mapsto
(\mathbf{x}^{(t-1)q},\mathbf{x}^{(0)q},\ldots,\mathbf{x}^{(t-2)q}). \]
As seen above,
the set $\Fix\sigma$ is a subgeometry
$\PG(rt-1,q^t)$ isomorphic to $\PG(rt-1,q)$: in the remainder
of this section we shall denote such subgeometry just as
$\PG(rt-1,q)$.
In particular, we see that $\Fix\sigma=\PG(rt-1,q)$ consists of points of the form $\{(\mathbf{x},\mathbf{x}^{q},\ldots,\mathbf{x}^{q^{t-1}}),\mathbf{x}=x_0,x_1,\ldots,x_{r-1}; x_i \in \FF_{q^t}\}$.

Observe that we have
$\sigma(U_i)=U_{{i+1}\pmod t}$ and the semilinear collineation $\sigma$
acts cyclically on the $U_i$;
furthermore, for any $u\in U_0$, $u\neq0$,
we have $u^{\sigma^i}\in U_i$ and the set
$\{u^{\sigma^i} : i=1,\ldots, t\}$ is linearly independent.
In particular, the subspace
$\Pi^*_u=\langle
\mathbf{u},\mathbf{u}^{\sigma},\ldots,\mathbf{u}^{\sigma^{t-1}} \rangle$
has projective dimension $t-1$.
The set
$\cS^*=\{\Pi^*_u, u \in U_0\}$
consists of $(t-1)$--spaces and it
is a $\FF_q$--rational normal $t$--fold scroll of $\PG(rt-1,q^t)$ over
$\PG(r-1,q^t)=\PG(U_0,q^t)$.
Any subspace $\Pi^*_u$ is fixed set-wise by $\sigma$;
hence, by Lemma \ref{subspace}$, \Pi_u:=\Pi_u^* \cap \Sigma$ has the same
projective dimension $t-1$. The collection of $(t-1)$--subspaces
$\cS=\{\Pi_u | u \in U_0 \}$ is a spread of $\PG(rt-1,q)$, see \cite{Segre},
also called the \emph{Segre spread} of $\PG(rt-1,q)$.

\begin{theorem}[\cite{Desarguesian_spread}]
The Segre spread of $\PG(rt-1,q)$, obtained as the intersection with
$\PG(rt-1,q)$ with  a $\FF_q$--rational normal $t$--fold scroll
of $\PG(rt-1,q^t)$
over $\PG(r-1,q^t)$, is a Desarguesian spread.
\end{theorem}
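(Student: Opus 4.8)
The plan is to identify the Segre spread $\cS$ with the $\FF_q$--linear representation of $\PG(r-1,q^t)$ and then to invoke the previously cited result of \cite{BarlCof}, according to which every such linear representation is a Desarguesian spread. Concretely, I would argue that the assignment $\langle u\rangle\mapsto\Pi_u$ realizes a collineation between $\PG(U_0,q^t)=\PG(r-1,q^t)$ and the spread, under which the elements of $\cS$ are exactly the linear--representation subspaces.

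First I would check that $\langle u\rangle\mapsto\Pi^*_u$ is well defined on the points of $\PG(U_0,q^t)$, i.e. that it is independent of the representative $u\in\langle u\rangle$. Since $\sigma$ is $\FF_q$--semilinear with companion automorphism $x\mapsto x^q$, for any $\lambda\in\FF_{q^t}\setminus\{0\}$ we have $(\lambda u)^{\sigma^i}=\lambda^{q^i}u^{\sigma^i}$; scaling the spanning vectors by the nonzero scalars $\lambda^{q^i}$ leaves the $\FF_{q^t}$--span unchanged, so $\Pi^*_{\lambda u}=\Pi^*_u$, and the same holds for $\Pi_u=\Pi^*_u\cap\Sigma$.

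The core step is to describe $\Pi_u$ explicitly. Taking $u=(\mathbf{u},0,\ldots,0)\in U_0$, the action of $\sigma$ gives $u^{\sigma^i}=(0,\ldots,0,\mathbf{u}^{q^i},0,\ldots,0)$, with $\mathbf{u}^{q^i}$ sitting in the $i$--th block. Hence, for every $\mu\in\FF_{q^t}$, the combination $\sum_i\mu^{q^i}u^{\sigma^i}=(\mu\mathbf{u},(\mu\mathbf{u})^q,\ldots,(\mu\mathbf{u})^{q^{t-1}})$ lies in $\Pi^*_u$ and, being of the shape $(\mathbf{x},\mathbf{x}^q,\ldots,\mathbf{x}^{q^{t-1}})$, also in $\Sigma$. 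These points form a $1$--dimensional $\FF_{q^t}$--subspace, that is, a $(t-1)$--dimensional $\FF_q$--subspace; since $\Pi_u=\Pi^*_u\cap\Sigma$ already has projective dimension $t-1$ by Lemma \ref{subspace}, they exhaust $\Pi_u$. Identifying $\Sigma$ with $\PG(rt-1,q)$ through the collineation $\mathbf{x}\mapsto(\mathbf{x},\mathbf{x}^q,\ldots,\mathbf{x}^{q^{t-1}})$, which is nothing but restriction of scalars from $V(r,q^t)$ to its underlying $\FF_q$--space, the generator $\Pi_u$ corresponds to $\{\lambda\mathbf{u}:\lambda\in\FF_{q^t}\}$, i.e. to the $\FF_q$--linear representation of the point $\langle\mathbf{u}\rangle\in\PG(r-1,q^t)$. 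Letting $u$ range over $U_0$ recovers the entire linear representation, whence $\cS$ coincides with it and is Desarguesian by \cite{BarlCof}.

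I expect the main difficulty to be bookkeeping rather than conceptual: one must keep the two scalar actions distinct --- the $\FF_{q^t}$--action parametrizing the scroll $\cS^*$ over $\PG(U_0,q^t)$, and the $\FF_q$--action under which $\Sigma$ is a subgeometry --- and verify that the Frobenius--compatibility conditions defining $\Sigma$ indeed cut each scroll generator $\Pi^*_u$ down to the claimed $\FF_{q^t}$--line. The well--definedness of $\langle u\rangle\mapsto\Pi_u$ together with the dimension count supplied by Lemma \ref{subspace} handle the delicate points, so that once $\Pi_u$ is matched term by term with the linear representation the theorem reduces to the already available result \cite{BarlCof}.
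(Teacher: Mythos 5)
This theorem is quoted in the paper from the reference by Bader and Lunardon and is given no proof there, so there is no internal argument to compare against; what you have written is a self-contained verification that the paper simply outsources. Your argument is correct: the key computation, that a point $\sum_i c_i u^{\sigma^i}$ of $\Pi^*_u$ lies in $\Fix(\sigma)$ exactly when $c_i=c_0^{q^i}$, so that $\Pi_u=\{(\mu\mathbf{u},\mu^q\mathbf{u}^q,\ldots,\mu^{q^{t-1}}\mathbf{u}^{q^{t-1}}):\mu\in\FF_{q^t}\}$, is exactly the correspondence the paper records (without derivation) immediately after the theorem statement, namely that a spread element may be read indifferently as $\{(\lambda u,\lambda^q u^q,\ldots,\lambda^{q^{t-1}}u^{q^{t-1}})\}$ or as $\langle u\rangle\in\PG(U_0,q^t)$. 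The well-definedness check via the semilinearity relation $(\lambda u)^{\sigma^i}=\lambda^{q^i}u^{\sigma^i}$ and the dimension count from Lemma \ref{subspace} (which guarantees the $t$-dimensional $\FF_q$-space you exhibit inside $\Pi_u$ is all of it) close the only delicate points, and the reduction to the Barlotti--Cofman theorem on linear representations is legitimate since the identification $\mathbf{x}\mapsto(\mathbf{x},\mathbf{x}^q,\ldots,\mathbf{x}^{q^{t-1}})$ is an $\FF_q$-linear isomorphism onto $\Fix(\sigma)$ and hence a collineation carrying the linear representation onto the Segre spread. One cosmetic remark: the set $\{(\mu\mathbf{u},(\mu\mathbf{u})^q,\ldots)\}$ is not an $\FF_{q^t}$-subspace of $V$ itself but only the image of the $\FF_{q^t}$-line $\{\lambda\mathbf{u}\}$ under the above identification; your phrasing blurs this, though the intended meaning and the dimension count are right.
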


The correspondence between linear representations and Segre
spreads is given as follows:
\[
\langle u \rangle_{\FF_q} \in \PG(U_0,q^t)\simeq\PG(r-1,q^t) \mapsto \langle
u,u^{\sigma},\ldots,u^{\sigma^{t-1}} \rangle \cap \PG(rt-1,q).
\]

Throughout this paper, we shall silently
identify the two aforementioned representations
of Desarguesian spreads.
In particular, a spread element will be regarded indifferently as a
$(t-1)$--subspace of $\PG(rt-1,q)$ of type
\[\{(\lambda u,\lambda^q
u^q,\ldots,\lambda^{q^{t-1}} u^{q^{t-1}}), \lambda \in \FF_{q^t}\} \]
and as its projection $\langle u \rangle \in \PG(U_0,q^t)$.

Fix now a Desarguesian $(t-1)$--spread $\cS$ of $\PG(rt-1,q)$ and fix
 also a subspace $\Pi$ of $\PG(rt-1,q)$ of projective dimension $m$.
The set $\Lambda$ of all elements of $\cS$ with non--empty intersection
with $\Pi$ is a \emph{linear set} of rank $m+1$. In other words,
$\Lambda$ may be regarded as the set of all points of
$\PG(r-1,q^t)$ whose coordinates are \emph{defined} by a vector space
$W$ over $\FF_q$ of dimension $m+1$.
Linear sets are used for several remarkable constructions in finite
geometry;
see \cite{linear_set} for a survey.

In order to avoid the trivial case $\Lambda=\cS$, we shall assume
$m+1\leq tr-t$. When $m+1=rt-t$ we shall
say that the linear set has \emph{maximum rank}.
Furthermore, as we are interested in \emph{proper} linear sets of
$\PG(r-1,q^t)$, that is linear sets which are not contained in any
hyperplane of $\PG(r-1,q^t)$, we have
$\langle  \Lambda \rangle = \PG(r-1,q^t) $; hence, $\Lambda$
must contain a frame of $\PG(r-1,q^t)$ and $m+1 \geq r$.
Throughout this paper
a linear set will always be understood
 to have rank $m+1$ with $r \leq m+1 \leq rt-t$.

We point out that,
when regarded point sets  of $\PG(r-1,q^t)$, linear sets provide a
generalization of the notion of subgeometry over $\FF_q$.
This is  shown by the following theorem.
\begin{theorem}[\cite{lunardon_polverino_04}\label{linearset}]
  Take
 $r\leq m+1 \leq t(r-1)$ and
 let $\Lambda$ be the projection in $\PG(m,q^t)$
 of\- a\- subgeometry $\Theta\cong \PG(m,q)$
 onto a $\PG(r-1,q^t)$. Then, $\Lambda$ is a $\FF_q$--linear set of
$\PG(r-1,q^t)$ of rank $m+1$.
Conversely, when $\Lambda$ is a linear set of
$\PG(r-1,q^t)$
of rank $m+1$, then either $\Lambda$ is a canonical subgeometry of
$\PG(r-1,q^t)$ or there exists a subspace $\Omega \cong \PG(m-r,q^t)$
\- of\- $\PG(m,q^t)$\- disjoint from $\PG(r-1,q^t)$ and a subgeometry
$\Theta\simeq\PG(m,q)$ disjoint from $\Omega$ such that $\Lambda$ is the
projection of $\Theta$ from $\Omega$ on $\PG(r-1,q^t)$.
\end{theorem}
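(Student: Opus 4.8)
The plan is to argue entirely at the level of the underlying vector spaces, translating ``projection of a subgeometry'' and ``$\FF_q$--linear set'' into statements about $\FF_q$--subspaces and $\FF_{q^t}$--linear maps. Write $U=V(r,q^t)$ for the vector space with $\PG(r-1,q^t)=\PG(U,\FF_{q^t})$, and recall that a proper linear set of rank $m+1$ is given by an $\FF_q$--subspace $W\leq U$ with $\dim_{\FF_q}W=m+1$ generating $U$ over $\FF_{q^t}$, via $\Lambda=\{\langle w\rangle_{\FF_{q^t}} : w\in W\setminus\{0\}\}$. I will set up both implications as mutually inverse passages between such a $W$ and a pair $(\Theta,\Omega)$.

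For the direct implication I would start from the ambient space $V=V(m+1,q^t)$, with $\PG(V,\FF_{q^t})=\PG(m,q^t)$. Using the projective equivalence of all subgeometries $\PG(m,q)$ recorded in the excerpt, I may take $\Theta$ to be the image in $\PG(V,\FF_{q^t})$ of the nonzero vectors of an $\FF_q$--subspace $W_0\leq V$ with $\dim_{\FF_q}W_0=m+1$ generating $V$ over $\FF_{q^t}$, while the center and the target correspond to an $\FF_{q^t}$--decomposition $V=\widetilde\Omega\oplus U$ with $\dim_{\FF_{q^t}}\widetilde\Omega=m+1-r$. Projection from $\Omega=\PG(\widetilde\Omega,\FF_{q^t})$ is then the $\FF_{q^t}$--linear map $\pi\colon V\to U$ with kernel $\widetilde\Omega$, and $\pi(\Theta)=\{\langle\pi(w)\rangle : w\in W_0\setminus\{0\}\}$. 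The hypothesis that $\Theta$ is disjoint from $\Omega$ says exactly $W_0\cap\widetilde\Omega=\{0\}$, so $\pi|_{W_0}$ is injective; hence $W:=\pi(W_0)$ has $\FF_q$--dimension $m+1$ and $\pi(\Theta)$ is precisely the linear set defined by $W$, of rank $m+1$.

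For the converse I would reverse this construction by \emph{manufacturing} the ambient space through extension of scalars. Given the defining subspace $W\leq U$, set $V:=W\otimes_{\FF_q}\FF_{q^t}$, so that $\dim_{\FF_{q^t}}V=\dim_{\FF_q}W=m+1$ and $\PG(V,\FF_{q^t})\cong\PG(m,q^t)$. The inclusion $W\hookrightarrow U$ extends uniquely to an $\FF_{q^t}$--linear map $\psi\colon V\to U$ with $\psi(w\otimes 1)=w$; properness of $\Lambda$ makes $\psi$ surjective, so $K:=\ker\psi$ has $\FF_{q^t}$--dimension $m+1-r$. I then take $\Theta:=\PG(W\otimes 1,\FF_q)$, a canonical subgeometry $\cong\PG(m,q)$ since $W\otimes 1$ generates $V$, and $\Omega:=\PG(K,\FF_{q^t})\cong\PG(m-r,q^t)$. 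Because $\psi$ restricts to the inclusion on $W\otimes 1$, one has $K\cap(W\otimes 1)=\{0\}$, i.e. $\Theta$ is disjoint from $\Omega$; identifying $U$ with an $\FF_{q^t}$--complement of $K$ through $\psi$ makes $\Omega$ disjoint from the target, and the projection $\pi$ along $K$ satisfies $\pi(\langle w\otimes 1\rangle)=\langle w\rangle_{\FF_{q^t}}$, whence $\pi(\Theta)=\Lambda$.

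Finally I would resolve the dichotomy by examining $K$: when $m+1=r$ the map $\psi$ is an isomorphism, $\Omega$ is empty, and $\Lambda$ is the isomorphic image of $\Theta$, hence a canonical subgeometry of $\PG(r-1,q^t)$; when $m+1>r$ one has $K\neq\{0\}$ and the proper projection above applies, with $\Omega\cong\PG(m-r,q^t)$. I expect the main obstacle to be the converse, and specifically the verification of the two disjointness conditions together with the identification of $U$ with a complement of $K$: one must check that $\psi$ simultaneously avoids collapsing $\Theta$ (injectivity on $W\otimes 1$, which preserves $\dim_{\FF_q}W=m+1$ and hence the rank) and is surjective onto $U$ (via properness, which fixes $\dim_{\FF_{q^t}}K=m+1-r$ and hence the dimension of the center), so that all the stated dimensions come out exactly.
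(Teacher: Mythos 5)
This theorem is quoted in the paper from \cite{lunardon_polverino_04} without proof, so there is no internal argument to compare against; your proposal has to stand on its own, and it does. The vector--space translation is the standard one (and is essentially how the result is proved in the cited literature): a canonical subgeometry corresponds to an $\FF_q$--form $W_0$ of the ambient $\FF_{q^t}$--space, projection from $\Omega=\PG(\widetilde\Omega,\FF_{q^t})$ is the linear quotient along $\widetilde\Omega$, and disjointness of $\Theta$ from $\Omega$ is exactly $W_0\cap\widetilde\Omega=\{0\}$, giving injectivity of $\pi|_{W_0}$ and hence rank $m+1$. The converse via extension of scalars $V=W\otimes_{\FF_q}\FF_{q^t}$ and the evaluation map $\psi$ is clean: surjectivity of $\psi$ is precisely properness of $\Lambda$, $\ker\psi$ has the right dimension $m+1-r$, both disjointness conditions follow from $\psi$ being injective on $W\otimes 1$ and from choosing a complement of $\ker\psi$, and the dichotomy is governed by whether $\ker\psi$ vanishes. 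Two small points you could make explicit: (i) realizing $U$ as a complement $U'$ of $K$ inside $V$ identifies your abstract construction with the literal containment $\PG(r-1,q^t)\subset\PG(m,q^t)$ demanded by the statement, so a sentence acknowledging this identification (or, equivalently, embedding $U$ into a concrete $V(m+1,q^t)$ from the start) would tighten the argument; (ii) the hypothesis $m+1\le t(r-1)$ plays no role in your computations --- it is only there to guarantee that $\Lambda$ is not all of $\PG(r-1,q^t)$ (every $t$--dimensional $\FF_q$--subspace meets $W$ once $m+1>rt-t$), and it is worth saying where it enters, namely nowhere in the construction itself.
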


In particular,
when $m+1=r$, we have $\Lambda \cong \PG(r-1,q)$ and this is
the unique linear set of rank $r$, up to
projective equivalence. When $m+1 > r$, there are several
non--equivalent linear set of any given rank; they
do not even have the same number of points. As $r$ and $t$
grow, the number of non--equivalent linear sets also grows,
so any attempt for classification  is
hopeless.

We end this section by showing that a linear set, when considered as a
subset of a Desarguesian spread, is a projection of a family of
maximal subspaces of a suitable Segre variety. We are aware that the
same result appears in the manuscript \cite{LVdV2014}, but we here present
a different and shorter proof which might be of independent interest.

The embedding:
$$
\PG(V_1,\FF)\times\PG(V_2,\FF)\times \cdots \times \PG(V_t,\FF)\rightarrow \PG(V_1\otimes V_2 \otimes \cdots \otimes V_t,\FF)\\
$$
$$(v_1,v_2,\ldots,v_t)\mapsto v_1\otimes v_2 \otimes \cdots \otimes v_t
$$
is the so called \emph{Segre embedding} of
$\PG(V_1,\FF)\times\PG(V_2,\FF)\times \cdots \times \PG(V_t,\FF)$ in
$\PG(V_1\otimes V_2\otimes\cdots\otimes V_t,\FF)$. Its image, comprising
the simple tensors
of $\PG(V_1\otimes V_2 \otimes \cdots \otimes V_t,\FF)$,
is an algebraic variety: the
\emph{Segre variety}.
 Suppose $t=2$ and $\dim V_i=n_i$ for $i=1,2$. Then,
 the Segre variety of $\PG(n_1n_2-1,\FF)$, say $\Sigma_{n_1n_2}$,
 contains two families of maximal subspaces:
 $\{\Pi_w, w \in V_1\}$, with $\Pi_w$ the $n_2$--dimensional vector space
 $\{w \otimes v, v\in V_2\}$, and $\{\Pi_u, u \in V_2\}$, with $\Pi_u$
 the $n_1$--dimensional vector space $\{v \otimes u, v\in V_1\}$. For an
 introduction to the study of this
 topic see, for instance, \cite[Chapter 25]{Thas}.

A $(t-1)$--regulus of rank $r-1$ of $\PG(rt-1,q)$ is a collection of
$(t-1)$--dimensional projective subspaces of type $\langle
P,P^{\gamma_1},\ldots,P^{\gamma_{t-1}} \rangle$, where $P \in \Gamma$,
$P^{\gamma_i} \in \Gamma_i$ with $\Gamma,\Gamma_1,\ldots,\Gamma_{t-1}$ being
$(r-1)$--dimensional subspaces spanning $\PG(rt-1,q)$ and the
collineations $\gamma_i$ defined such that $\gamma_i : \Gamma \rightarrow
\Gamma_i$, $i=1,2,\ldots,t-1$; see \cite{desarg}.
Let now $\Sigma_{rt} \subset \PG(rt-1,q)$ be
the Segre variety  of $\PG(r-1,q) \times \PG(t-1,q)$. We
recall the following result.

\begin{theorem}[\cite{desarg}]
\label{t7}
Any $(t-1)$--regulus of rank $r-1$ of a $\PG(rt-1,q)$ is the system of
maximal subspaces of dimension $t-1$ of the Segre variety $\Sigma_{rt}$ and
conversely.
\end{theorem}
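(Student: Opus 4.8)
The plan is to identify the given regulus and the system of maximal $(t-1)$--dimensional subspaces of $\Sigma_{rt}$ as two readings of one and the same tensor--product decomposition $V\cong V_1\otimes V_2$ with $\dim_{\FF}V_1=r$ and $\dim_{\FF}V_2=t$. Recall the two families of maximal subspaces of $\Sigma_{rt}$: the $(t-1)$--dimensional spaces $\Pi_w=\langle\,w\otimes v : v\in V_2\,\rangle$ indexed by $\langle w\rangle\in\PG(r-1,q)$, and the $(r-1)$--dimensional spaces $\Pi_u=\langle\,v\otimes u : v\in V_1\,\rangle$. To dispose of the converse first, I would fix a basis $b_0,b_1,\ldots,b_{t-1}$ of $V_2$ and set $\Gamma_i=\Pi_{b_i}$; since the $b_i$ form a basis we have $V=\bigoplus_{i}\Gamma_i$, so the $\Gamma_i$ are $(r-1)$--dimensional, mutually skew, and span $\PG(rt-1,q)$. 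The projectivity $\gamma_i$ induced by $v\otimes b_0\mapsto v\otimes b_i$ carries $\Gamma_0$ onto $\Gamma_i$, and because each $\Pi_w$ meets $\Gamma_i$ in the single point $\langle w\otimes b_i\rangle=(\langle w\otimes b_0\rangle)^{\gamma_i}$, every $\Pi_w$ has the shape $\langle P,P^{\gamma_1},\ldots,P^{\gamma_{t-1}}\rangle$ with $P=\langle w\otimes b_0\rangle\in\Gamma_0$. Thus the $(t-1)$--system of $\Sigma_{rt}$ is a regulus of rank $r-1$.

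For the forward implication I would start from an arbitrary regulus given by the $(r-1)$--subspaces $\Gamma=\Gamma_0,\Gamma_1,\ldots,\Gamma_{t-1}$ and the projectivities $\gamma_i\colon\Gamma_0\to\Gamma_i$. The first step is a dimension count: $t$ subspaces of vector dimension $r$ spanning a space of vector dimension $rt=t\cdot r$ must be independent, so $V=W_0\oplus W_1\oplus\cdots\oplus W_{t-1}$, where $W_i$ is the underlying vector space of $\Gamma_i$. Next I would lift each $\gamma_i$ to an $\FF$--linear isomorphism $g_i\colon W_0\to W_i$ (with $g_0=\mathrm{id}$), introduce $V_2=\FF^{t}$ with basis $b_0,\ldots,b_{t-1}$, and define the linear map $\Phi\colon W_0\otimes V_2\to V$ by $\Phi(w\otimes b_i)=g_i(w)$. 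Because $V$ is the direct sum of the $W_i$ and each $g_i$ is an isomorphism onto $W_i$, the map $\Phi$ is a linear isomorphism; it therefore induces a projectivity carrying the standard Segre variety of $\PG(W_0)\times\PG(V_2)$ onto a Segre variety $\Sigma_{rt}$ of $\PG(rt-1,q)$. Finally $\Phi$ sends $\langle w\otimes b_0,\ldots,w\otimes b_{t-1}\rangle$ to $\langle w,g_1(w),\ldots,g_{t-1}(w)\rangle$, which for $P=\langle w\rangle$ is precisely the regulus element $\langle P,P^{\gamma_1},\ldots,P^{\gamma_{t-1}}\rangle$. Hence the regulus coincides with the $(t-1)$--system of this $\Sigma_{rt}$.

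The step I expect to be the crux is the passage from the collineations $\gamma_i$ to the linear isomorphisms $g_i$, that is, the fact that the defining maps of the regulus are realised by $\FF$--linear isomorphisms compatible with the direct--sum decomposition; this linearity is exactly what makes $\Phi$ linear and its image a genuine Segre variety rather than a twisted analogue. Indeed, already for $r=t=2$ a semilinear transversal map would yield a Hermitian--type surface in place of a hyperbolic quadric, so the hypothesis that the $\gamma_i$ are projectivities is essential and cannot be weakened. Once this identification is secured, the remaining points — the independence of the $\Gamma_i$ obtained from the dimension count, and the computation that $\Pi_w$ meets each $\Gamma_i$ in a single point — are routine, and the two constructions are manifestly inverse to one another, which completes both directions.
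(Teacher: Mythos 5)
The paper does not prove this statement at all: Theorem~\ref{t7} is quoted with a citation to Casse and O'Keefe \cite{desarg}, so there is no internal proof to compare yours against. Judged on its own, your argument is correct and self-contained, and it is the standard way to establish the result: the converse by choosing a basis $b_0,\ldots,b_{t-1}$ of $V_2$ and reading off $\Gamma_i=\Pi_{b_i}$ together with the projectivities $v\otimes b_0\mapsto v\otimes b_i$, and the forward direction by forcing $V=W_0\oplus\cdots\oplus W_{t-1}$ from the dimension count, lifting the $\gamma_i$ to linear isomorphisms $g_i$, and transporting the standard Segre variety through $\Phi(w\otimes b_i)=g_i(w)$. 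One small point is worth making explicit: each $g_i$ is determined by $\gamma_i$ only up to a nonzero scalar, so $\Phi$ is not canonical; but replacing $g_i$ by $\lambda_i g_i$ amounts to composing $\Phi$ with $\mathrm{id}\otimes D$ for a diagonal map $D$ of $V_2$, which fixes the Segre variety of $W_0\otimes V_2$, so the image $\Sigma_{rt}$ (and not just the regulus) is well defined. Your closing observation is also apt: the paper's definition of a regulus speaks of ``collineations'' $\gamma_i$, and for non-prime $q$ a semilinear transversal map would produce a twisted scroll rather than a Segre variety (e.g.\ a surface of degree $\sqrt q+1$ instead of a hyperbolic quadric for $r=t=2$), so the statement implicitly requires the $\gamma_i$ to be projectivities, as in \cite{desarg}.
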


Using theorems \ref{linearset} and \ref{t7}
we can formulate the following geometric description.
\begin{theorem}\label{segre}
  By field reduction, either the points of a $\FF_q$--linear set $\Lambda$ of
  $\PG(r-1,q^t)$ correspond to the system of
  maximal subspaces of dimension $t-1$ of the
  Segre variety $\Sigma_{rt}$
  or there exists a subspace $\Theta
  \cong \PG((m-r+1)t-1,q)$ of $\PG((m+1)t-1,q)$, disjoint from
  \-$\PG(rt-1,q)$ and a Segre variety $\Sigma_{m+1,t}$
  also disjoint from $\Theta$ such that the field reduction of the
  points of $\Lambda$
  corresponds to the projection of the $(t-1)$--maximal
  subspaces of $\Sigma_{m+1,t}$ from $\Theta$
  on $\PG(rt-1,q)$.
\end{theorem}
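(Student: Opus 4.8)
The plan is to transport the dichotomy of Theorem \ref{linearset} through field reduction, using Theorem \ref{t7} to recognise the image of a subgeometry as the system of maximal subspaces of a Segre variety. The argument rests on two observations. First, field reduction is a lattice homomorphism: if we regard an $\FF_{q^t}$--subspace $U$ of the ambient space simply as the $\FF_q$--subspace $\hat U$ it carries, then $\widehat{U\cap U'}=\hat U\cap\hat U'$ and $\widehat{U+U'}=\hat U+\hat U'$, because sums and intersections are computed on the same underlying sets. Consequently field reduction preserves spans, intersections, and dimensions of complements, and in particular it commutes with projection from a centre onto a complementary subspace. Second, I would establish the following base case: the image under field reduction of a \emph{canonical} subgeometry $\PG(s-1,q)$ of $\PG(s-1,q^t)$ is precisely the system of maximal $(t-1)$--subspaces of a Segre variety $\Sigma_{st}$ of $\PG(s-1,q)\times\PG(t-1,q)$.

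For this base case I would identify, as $\FF_q$--spaces, $\FF_{q^t}^s=\FF_q^s\otimes_{\FF_q}\FF_{q^t}$, so that $\PG(st-1,q)=\PG(\FF_q^s\otimes_{\FF_q}\FF_{q^t},q)$ carries the Segre variety of $\PG(s-1,q)\times\PG(t-1,q)$. A point $\langle w\rangle$ of the canonical subgeometry, with $w\in\FF_q^s$, field--reduces to the $\FF_q$--subspace $\{\lambda w:\lambda\in\FF_{q^t}\}=w\otimes\FF_{q^t}$, which is exactly the maximal $(t-1)$--subspace of $\Sigma_{st}$ through $w$; letting $w$ range over $\PG(s-1,q)$ recovers the whole system. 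Equivalently, this collection of spread elements is a $(t-1)$--regulus of rank $s-1$, whence Theorem \ref{t7} yields the same conclusion. Taking $s=r$ gives the first alternative of the statement.

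For the general case I would invoke Theorem \ref{linearset}. If $\Lambda$ is a canonical subgeometry we are in the situation just treated. Otherwise there are a subspace $\Omega\cong\PG(m-r,q^t)$ of $\PG(m,q^t)$ disjoint from $\PG(r-1,q^t)$ and a subgeometry $\Theta_0\cong\PG(m,q)$ disjoint from $\Omega$ with $\Lambda$ the projection of $\Theta_0$ from $\Omega$ onto $\PG(r-1,q^t)$. Applying field reduction to the ambient $\PG(m,q^t)$ lands us in $\PG((m+1)t-1,q)$; the base case, now with $s=m+1$, turns $\Theta_0$ into the system of maximal $(t-1)$--subspaces of a Segre variety $\Sigma_{m+1,t}$, while $\Omega$ becomes a subspace of projective dimension $(m-r+1)t-1$, namely the subspace $\Theta$ of the present statement. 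Since $\Theta_0\cap\Omega=\emptyset$, their images are disjoint, so projection from the reduced centre onto $\PG(rt-1,q)$, the image of $\PG(r-1,q^t)$, is well defined on each maximal subspace. By the first observation field reduction commutes with this projection, and therefore the field reduction of $\Lambda$ is the projection of the maximal $(t-1)$--subspaces of $\Sigma_{m+1,t}$ from $\Theta$ onto $\PG(rt-1,q)$, which is the second alternative.

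The main obstacle I anticipate is the verification that field reduction genuinely commutes with projection once points have been ``blown up'' into $(t-1)$--subspaces: one must check that the reduced centre is disjoint from the image of each point of $\Theta_0$, so that the projection of the corresponding $(t-1)$--subspace is again a single $(t-1)$--subspace, and that this projected subspace coincides with the spread element attached to the projected point of $\Lambda$. Both follow from the disjointness hypotheses of Theorem \ref{linearset} together with the lattice--homomorphism property, but the bookkeeping of dimensions---confirming that $\PG(rt-1,q)$ remains a complement of the reduced centre of projective dimension $(m-r+1)t-1$ inside $\PG((m+1)t-1,q)$---is where the care is needed.
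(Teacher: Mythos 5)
Your proposal is correct and follows essentially the same route as the paper: a base case identifying the field reduction of a canonical subgeometry with the system of maximal $(t-1)$--subspaces of a Segre variety (the paper does this via explicit collineations $\gamma_i$ exhibiting a $(t-1)$--regulus and Theorem \ref{t7}, you via the tensor identification $\FF_{q^t}^s\cong\FF_q^s\otimes_{\FF_q}\FF_{q^t}$, which amounts to the same thing), followed by an appeal to Theorem \ref{linearset} to dispose of the case $m+1>r$ by projection. Your treatment of that second step, in particular the verification that field reduction commutes with projection from the reduced centre, is more detailed than the paper's one--line argument and fills in exactly what the paper leaves implicit.
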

\begin{proof}
  Write $m+1=r$; then, $\Lambda\cong \PG(r-1,q)$. As all the Desarguesian
  subgeometries of the same dimension
  are projectively equivalent, we can suppose without loss of generality
  $\Lambda=\{\langle(x_0,x_1,\ldots,x_{r-1})\rangle_{\FF_{q^t}} | x_i \in \FF_q\}$.
  For any point $\mathbf{x}:=(x_0,x_1,\ldots,x_{r-1})$, the corresponding spread
  element in $\PG(rt-1,q)$
  is $\{(\lambda
  \mathbf{x},\lambda^q \mathbf{x},\ldots,\lambda^{q^{t-1}}
  \mathbf{x}), \lambda \in \FF_{q^t}\}$. Let
  $(1,\xi_1,\ldots,\xi_{t-1})$ be a basis for $\FF_{q^t}$ regarded
  as $\FF_q$-vector space and
  \[ \gamma_i: (\mathbf{x}^{(0)},\mathbf{x}^{(1)},\ldots,\mathbf{x}^{(t-1)})
  \mapsto
  (\xi_i \mathbf{x}^{(0)},\xi_i^{q}\mathbf{x}^{(1)},\ldots,\xi_i^{q^{t-1}}\mathbf{x}^{(t-1)}).
  \]
  Observe that
  the collineations $\gamma_i$ of $\PG(rt-1,q^t)$ all fix $\PG(rt-1,q)$
  set-wise; thus, for all $i=1,\ldots,t-1$ they act also as
  collineations of $\PG(rt-1,q)$.

  Let $P$ be the point $(\mathbf{x},\mathbf{x},\ldots,\mathbf{x})$, then 
 \[ \{(\lambda \mathbf{x},\lambda^q \mathbf{x},\ldots,\lambda^{q^{t-1}}
 \mathbf{x}), \lambda \in \FF_{q^t}\}=\langle P,P^{\gamma_1},\ldots,P^{\gamma_{t-1}}
 \rangle_{\FF_q}; \]
 so, by \cite{desarg}, the linear representation of
 a subgeometry is the system of
 maximal subspaces of dimension $t-1$ of the Segre variety $\Sigma_{rt}$

 If $m+1 > r$, then, by Theorem \ref{linearset} and by the well--known fact
 that the subspace spanned by any two elements of a Desarguesian
 spread is partitioned by spread elements, we have the statement.
\end{proof}

As a system of maximal subspaces of a Segre variety is always
a partition of the point-set of the variety, when we regard a linear set
$\Lambda$ of
$\PG(r-1,q^t)$ as a set of points of $\PG(rt-1,q)$, rather than  as a
particular collection of $(t-1)$--subspaces, we see
that $\Lambda$ is
either a Segre variety $\Sigma_{rt}$ or, for $m+1>r$ the projection of a Segre
variety $\Sigma_{m+1,t}$ on a $\PG(rt-1,q)$. We point out that
Segre varieties and their
projection share several combinatorial and geometric properties;
see, for example, \cite{Thas_VMald}.

\section{Representation of linear sets on the Grassmannian}

The image under the Pl\"{u}cker embedding of a
Desarguesian spread $\cS$ of $\PG(rt-1,q)$
determines the algebraic variety $\cV_{rt}$; this variety
actually lies in a subgeometry $\PG(r^t-1,q)$;
see
\cite{Segre,LunardonNS,V}.

 We briefly recall a few essential properties of
$\cV_{rt}$.
Let $V:=V(rt,q^t)$ and let $\varepsilon_t:G(rt,t)\to\PG(\bigwedge^t V,q^t)$ be
the usual Pl\"{u}cker embedding of
the $(t-1)$--projective subspaces of $\PG(rt-1,q^t)$
in $\PG(\bigwedge^t V,q^t)$. Denote by $\cG^*_{rt,t}$
the image of such embedding.
Recall that the subgeometry $\PG(rt-1,q)$ is the set of fixed points of
$ \sigma:
(\mathbf{x}^{(0)},\mathbf{x}^{(1)},\ldots,\mathbf{x}^{(t-1)}) \mapsto
(\mathbf{x}^{(t-1)q},\mathbf{x}^{(0)q},\ldots,\mathbf{x}^{(t-2)q})$.
As
$\PG(\binom{rt}{t}-1,q^t)=\PG(\bigwedge^t V,q^t)$ is spanned by its
totally decomposable vectors, that is its tensors of
rank $1$, we can define a collineation
$\sigma^*$ of $\PG(\bigwedge^t V,q^t)$ as
\[
\sigma^*: v_0 \wedge v_1 \wedge \cdots \wedge v_{t-1} \mapsto
v_0^{\sigma} \wedge v_1^{\sigma} \wedge \cdots \wedge v_{t-1}^{\sigma}.
\]
The collineation $\sigma^*$ turns out to be a $\FF_q$--linear collineation
of order $t$ of
$\PG(\binom{rt}{t}-1,q^t)$; hence,
the set of its fixed points is a subgeometry $\PG(\binom{rt}{t}-1,q)$.

By Lemma \ref{subspace}, a subspace of $\PG(rt-1,q^t)$ meets
$\PG(rt-1,q)$ in a subspace of the
same dimension if, and only if, it is fixed set-wise by $\sigma$. Clearly, any subspace of $\PG(rt-1,q)$ is contained
in exactly one subspace of $\PG(rt-1,q^t)$ of the same
dimension. Thus, the Grassmannian of the
$(t-1)$--subspaces of $\PG(rt-1,q)$, say  $\cG_{rt,t}$, can be
obtained as the intersection  $\cG_{rt,t}=\cG^*_{rt,t}(V)\cap\Fix(\sigma^*)$.

Recall now the decomposition
$V=U_0 \oplus U_1 \oplus \cdots \oplus U_{t-1}$ and let
$V^{\otimes t}:=\displaystyle \underbrace{V \otimes V \otimes \cdots
\otimes V}_{t\text{ times}}$. Denote by $I$ be the two-sided ideal of the
tensor algebra $\mathcal{T}(V)=\displaystyle\sum_{i=0}^{\infty}
V^{\otimes i}$ generated by $\{v \otimes v, v \in V\}$. As
$\bigwedge^t V=\frac{V^{\otimes t}}{V^{\otimes t} \cap I}$ and
$u_0\otimes u_1\otimes\cdots\otimes u_{t-1}\not\in I$ when
$u_i\in U_i$ and $u_i\neq 0$,
we can identify (with a slight abuse of notation) the element
$u_0\otimes u_1
\otimes \cdots \otimes u_{t-1}$ with
$u_0\wedge u_1 \wedge \cdots \wedge u_{t-1}$.
In particular,
we shall regard
$U_0 \otimes U_1 \otimes \cdots \otimes U_{t-1}$,
as a subspace of $\bigwedge^t V$,
write $\varepsilon_t(\Pi^*_{u})=u\otimes
u^{\sigma}\otimes \cdots \otimes u^{\sigma^{t-1}}$ and regard
$\cV_{rt}$ as a subvariety of $\cG_{rt,t}$.

Let now $\Sigma$
be the Segre variety of $\PG(r^t-1,q^t)$ consisting of the simple tensors
of $U_0\otimes U_1 \otimes \cdots \otimes U_{t-1}$,
and denote by  $\sigma^{\dagger}$ the $\FF_q$--linear collineation induced
by $\sigma$ on $\PG(U_0\otimes U_1 \otimes \cdots \otimes U_{t-1},q^t)$;
in particular,
$\sigma^{\dagger}(u_0\otimes u_1 \otimes\cdots \otimes u_{t-1})= u_{t-1}^q\otimes
u_0^{q} \otimes\cdots \otimes u_{t-2}^q $ and
$\cV_{rt}=\Sigma \cap\Fix(\sigma^{\dagger})$.
Actually, $\cV_{rt}$ is also as the image of the map
\[ \alpha:(x_0,\ldots,x_{r-1})\in\PG(r-1,q^t)\mapsto
(\prod_{i=0}^{t-1} x_{f(i)}^{q^i})_{f \in \fF}\in \PG(r^t-1,q) \subset \PG(r^t-1,q) \]
where $\fF=\{ f: \{0,\ldots, t-1\}\to\{0,\ldots,r-1\} \}$.
Here, $\alpha$ is the map that makes the following diagram commute:
$$\begin{array}{ccc}
                         & \alpha          &          \\
    \mathrm{PG}(r-1,q^t) & \longrightarrow &  \mathrm{PG}(r^t-1,q) \\
                           &               &                        \\
    \text{field reduction} \searrow             &                &             \nearrow   \varepsilon_t        \\
                      &       \mathrm{PG}(rt-1,q)                &                    \\
                      &          \cS=\text{ Desarguesian Spread}                      &            \\
  \end{array}
 $$

 Let now $\Sigma_{rt}$ be the Segre embedding
 of $\PG(r-1,\FF_q) \times \PG(t-1,\FF_q)$.
 It is well known that the Pl\"{u}cker embedding of
 a family of maximal subspaces of dimension $t-1$ of $\Sigma_{rt}$
 is a Veronese variety of dimension $r-1$ and degree $t$; see, for
 instance, \cite[Exercise 9.23]{H}.
 By Theorem \ref{segre}, the field reduction of a
 subgeometry $\PG(r-1,q)$ of $\PG(r-1,q^t)$ consists of the family of
 maximal subspaces of dimension $t-1$ of $\Sigma_{rt}$. Up to
 isomorphism, we can indeed assume $\PG(r-1,q)=\{(x_0,x_1,\ldots,x_{r-1}), x_i
 \in\FF_q\}$. The image under $\alpha$ of such a set is, clearly, a
 Veronese variety of dimension $r-1$ and degree $t$, the complete
 intersection of $\cV_{rt}$ with a subspace of dimension
 $\binom{r-1+t}{t}-1$.
 As a consequence of Theorem
 \ref{segre}, the image of a linear set of rank $m+1$ on
 $\cV_{rt}$ is the projection of a Veronese variety of dimension $m$
 and degree $t$. Hence, the dimension of such a variety is at most $m$.

 \begin{lemma}\label{min_m}
A minimal subspace $\Pi$ defining a linear set $\Lambda$ of
$\PG(rt-1,q)$ is spanned by points
$\{P_0,P_1,\ldots P_{m}\}$ such that $\forall i=0,1,\ldots,m$
the spread element containing $P_i$ intersects $\Pi$ only in $P_i$.
\end{lemma}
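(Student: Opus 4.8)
The plan is to work entirely inside $\Pi$ and to exploit that the spread $\cS$ partitions the points of $\Pi$. Call a point $P\in\Pi$ \emph{good} if the spread element $S_P\in\cS$ through $P$ satisfies $S_P\cap\Pi=\{P\}$; these are exactly the points described in the statement. Since each $S\in\Lambda$ meets $\Pi$ in a projective subspace $S\cap\Pi$, and the spread elements partition the point set of $\Pi$, the good points are precisely those lying on a spread element that touches $\Pi$ in a single point. The whole statement then reduces to one claim: \emph{the good points span $\Pi$}. Once this is established I would simply extract from them a projective frame $P_0,\dots,P_m$ of the $m$--dimensional space $\Pi$, which is the asserted spanning set.

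To prove the claim I would argue by contradiction, using minimality of $\Pi$ in an essential way. Suppose the good points fail to span $\Pi$; then they are all contained in some hyperplane $H$ of $\Pi$, of projective dimension $m-1$. I will show that $H$ itself already defines $\Lambda$, that is, every $S\in\cS$ meeting $\Pi$ also meets $H$. Since conversely any $S$ meeting $H\subseteq\Pi$ automatically lies in $\Lambda$, this says $H$ defines the same linear set. As $H$ has projective dimension $m-1$, it would realize $\Lambda$ as a linear set of rank at most $m<m+1$, contradicting the hypothesis that $\Pi$ is a minimal defining subspace.

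The verification of ``$H$ defines $\Lambda$'' splits according to how a spread element $S\in\Lambda$ meets $\Pi$. If $S\cap\Pi$ is a single point $P$, then $P$ is good, hence $P\in H$ by assumption, so $S$ meets $H$. If instead $\ell:=S\cap\Pi$ has projective dimension at least $1$, I invoke the Grassmann dimension formula inside $\Pi\cong\PG(m,q)$: since $H$ has codimension $1$,
\[
\dim(\ell\cap H)\;\ge\;\dim\ell+\dim H-\dim\Pi\;\ge\;1+(m-1)-m\;=\;0,
\]
so $\ell\cap H\neq\emptyset$ and again $S$ meets $H$. These two cases exhaust $\Lambda$, so every element of $\Lambda$ meets $H$, which is the reduction sought.

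The main obstacle, and the point where minimality is genuinely needed, is the second case above: one must guarantee that a ``bad'' spread element, meeting $\Pi$ in a subspace of dimension $\ge 1$, cannot avoid $H$. This is exactly why $H$ must be chosen as a hyperplane (codimension $1$) rather than as the full span of the good points: for a line $\ell$ the bound $\dim\ell+\dim H-\dim\Pi\ge 0$ is tight and would fail for a subspace of higher codimension, so a naive ``span of good points'' argument does not close. Granting the hyperplane choice, the contradiction with the rank of $\Lambda$ is immediate, and choosing a frame among the good points finishes the proof.
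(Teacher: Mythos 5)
Your proposal is correct and follows essentially the same route as the paper: assume the weight--$1$ (``good'') points do not span $\Pi$, enclose them in a hyperplane $H$, observe that any spread element meeting $\Pi$ in dimension $\geq 1$ must meet the hyperplane $H$, and contradict minimality. The only differences are cosmetic --- the paper first disposes separately of the case where no good point exists, a step your argument subsumes, and the set you extract at the end should be called a spanning set of $m+1$ independent points rather than a frame.
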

\begin{proof}
Let $\Pi$ be a minimal defining subspace for $\Lambda$ and
suppose that every spread element intersects $\Pi$ in at least a
line. Consider a hyperplane $\Pi'$ of $\Pi$.
As $\Pi'$ meets each spread element with non--empty intersection
with $\Pi$, we have that $\Pi'$ and $\Pi$
determine the same linear set and $\Pi'<\Pi$ --- a contradiction.
Thus, we can assume that $\Pi$ contains
at least a point $P$ such that the spread element through $P$
intersects $\Pi$ only in $P$. According to the terminology of
\cite{linear_set}, $P$ is a point of the linear set
of \emph{weight $1$}.
Suppose now that $\Pi$ is not spanned by its points of
weight $1$. Then, there is
a hyperplane $\Pi'$ in $\Pi$ containing
all of these points. A spread element either intersects $\Pi$ in only
one point $P$, hence $P \in \Pi'$, or it intersects $\Pi$ at least a
line;
thus it must intersect
also $\Pi'$. It follows $\Pi'$ and $\Pi$ determine the same linear set
and $\Pi'<\Pi$, contradicting the minimality of $\Pi$ again.
\end{proof}

From now on, when we say that a linear set $\Lambda$ has rank $m+1$, we suppose
that $m$ is the minimum possible; in particular
the defining subspace of $\Lambda$
 is taken to be of the type of Lemma \ref{min_m}.

\begin{proposition}
The image of a linear set $\Lambda$ of $\PG(rt-1,q)$ of rank $m+1$ on
the Grassmannian, hence on $\cV_{rt}$, is an algebraic variety of
dimension $m$, the projection of a Veronese variety of dimension $m$
and degree $t$.
\end{proposition}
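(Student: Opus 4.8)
The plan is to realise the image as a linear projection of a Veronese variety and then show that this projection does not lower the dimension. By the discussion preceding the statement, once the defining subspace $\Pi$ is chosen minimal as in Lemma~\ref{min_m}, the image of $\Lambda$ on $\cV_{rt}$ is the linear projection $\pi_{\mathfrak C}(\mathfrak V)$ of the degree-$t$ Veronese variety $\mathfrak V$ of dimension $m$ (the image under $\alpha$ of a subgeometry $\PG(m,q)$, on which the Frobenius twists $x\mapsto x^{q^i}$ are trivial, so that $\alpha$ collapses to the ordinary degree-$t$ Veronese map), from a linear centre $\mathfrak C$, namely the Pl\"ucker image of the field reduction of the projection centre furnished by Theorem~\ref{linearset}. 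Being the projection of a projective variety, $\pi_{\mathfrak C}(\mathfrak V)$ is closed, and its dimension is at most $\dim\mathfrak V=m$; it therefore remains only to prove the reverse inequality $\dim\pi_{\mathfrak C}(\mathfrak V)\ge m$.

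First I would isolate the purely projective fact that drives the lower bound: if a linear subspace $L$ is disjoint from a projective variety $X$, then $\pi_L$ restricts to a finite morphism on $X$, whence $\dim\pi_L(X)=\dim X$. I would prove this by factoring $\pi_L$ as a composition of projections from single points. A projection from a point $p\notin X$ has only finite fibres, since a line through $p$ meeting $X$ in infinitely many points would be contained in $X$ and force $p\in X$; moreover, if the current centre is disjoint from the current variety, then after one more point-projection the image of the centre is still disjoint from the projected variety, so the disjointness hypothesis is preserved all along the factorisation.

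The substantive step is then to verify $\mathfrak C\cap\mathfrak V=\varnothing$ over $\overline{\FF_q}$. I would deduce this from Theorems~\ref{linearset} and~\ref{segre}: the minimality of $\Pi$ guarantees that $\Lambda$ genuinely has rank $m+1$, so that $\Lambda$ is the projection of a subgeometry $\cong\PG(m,q)$ from a centre \emph{disjoint} from the host space $\PG(r-1,q^t)$. After field reduction this centre is disjoint from $\PG(rt-1,q)$, and its Pl\"ucker image $\mathfrak C$ shares no point with $\mathfrak V$, the latter being the Pl\"ucker image of the system of maximal $(t-1)$-subspaces of the Segre variety $\Sigma_{m+1,t}$. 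Combining $\mathfrak C\cap\mathfrak V=\varnothing$ with the projective fact above yields $\dim\pi_{\mathfrak C}(\mathfrak V)=m$. The case $m+1=r$ is immediate: then $\mathfrak C=\varnothing$, the image is the Veronese $\mathfrak V$ itself, and we already know it to be the complete intersection of $\cV_{rt}$ with a subspace of dimension $\binom{r-1+t}{t}-1$.

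The main obstacle I anticipate is precisely the disjointness $\mathfrak C\cap\mathfrak V=\varnothing$ read over the algebraic closure: the disjointness provided by Theorem~\ref{linearset} is a statement about $\FF_{q^t}$-rational subspaces, whereas $\mathfrak V$ acquires new points over $\overline{\FF_q}$, so care is needed to rule out that such a point lands on the (rational) centre $\mathfrak C$. Should transferring the disjointness to $\overline{\FF_q}$ prove delicate, an alternative is to establish the lower bound locally: by Lemma~\ref{min_m} the subspace $\Pi$ is spanned by weight-one points, and at the image of such a point one checks that the differential of $\pi_{\mathfrak C}\circ\alpha$ has rank $m$, which already forces $\dim\pi_{\mathfrak C}(\mathfrak V)\ge m$ by upper semicontinuity of the fibre dimension.
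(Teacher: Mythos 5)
Your main route fails at the disjointness step, and the obstacle you flag at the end is not merely delicate: the claim $\mathfrak C\cap\mathfrak V=\varnothing$ over $\overline{\FF_q}$ is \emph{false} whenever $m+1>r$, i.e.\ in every case where a projection actually occurs. Concretely, write $W=\langle w_0,\dots,w_m\rangle_{\FF_q}$ and parametrize $\Lambda$ by $a\in\FF_q^{m+1}$ via $x=\sum_j a_jw_j$. Since $a_j^{q^i}=a_j$, the map $\alpha$ restricted to $\Lambda$ becomes $a\mapsto M_0(a)\otimes\cdots\otimes M_{t-1}(a)$, where $M_i:\overline{\FF_q}^{m+1}\to\overline{\FF_q}^{\,r}$ is the linear map with matrix $(w_{j,k}^{q^i})$. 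Every coordinate is a degree-$t$ form in $a$, so the map factors as $\nu_t$ followed by a linear projection, and the centre $\mathfrak C$ meets the Veronese variety exactly in $\nu_t\bigl(\bigcup_i\PG(\ker M_i)\bigr)$: indeed all coordinates of $M_0(a)\otimes\cdots\otimes M_{t-1}(a)$ vanish if and only if some $M_i(a)=0$. Properness of $\Lambda$ forces $\rank M_i=r$, hence $\dim\ker M_i=m+1-r>0$, so over $\overline{\FF_q}$ the centre always meets the Veronese (while containing none of its $\FF_q$-rational points, which is why the map is defined at every point of $\Lambda$). The finite-morphism argument is therefore unavailable, and the lower bound $\dim\ge m$ still has to be proved.

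Your fallback (rank of the differential at a weight-one point) is a viable repair, but as stated it defers exactly the hard point. The differential of $a\mapsto M_0(a)\otimes\cdots\otimes M_{t-1}(a)$ at the parameter $e_0$ of a weight-one point has rank $m$ precisely when $\bigcap_i(\ker M_i+\langle e_0\rangle)=\langle e_0\rangle$ holds over $\overline{\FF_q}$; the weight-one hypothesis gives this only for $\FF_q$-rational vectors, so one needs a descent step --- for instance, $v\mapsto v^{(q)}$ permutes the subspaces $\ker M_i+\langle e_0\rangle$ cyclically, so their intersection is Frobenius-stable, hence defined over $\FF_q$, hence spanned by its rational points, which reduce to $\langle e_0\rangle$ by weight one. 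Without something of this kind the fallback inherits the same rational-versus-geometric gap as the main route. For comparison, the paper avoids the closure issue altogether: taking the weight-one spanning points $P_0,\dots,P_m$ of Lemma~\ref{min_m}, it builds the chain $\cV_0\subsetneq\cV_1\subsetneq\cdots\subsetneq\cV_m=\cV$ of irreducible closed subvarieties, where $\cV_i$ is the image of the linear set defined by $\langle P_0,\dots,P_i\rangle$; strictness is certified by rational points alone, since the image of the spread element through $P_{i+1}$ lies outside the closed Schubert variety $\Omega(W_i)$ that contains $\cV_i$, and a chain of length $m+1$ forces $\dim\cV\ge m$.
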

\begin{proof}
By Theorem \ref{segre} and the above remarks,
the image of $\Lambda$, say $\cV$, is the
projection of a Veronese variety of dimension $m$.
Thus,  its dimension
is at most $m$. Let $\Pi=\langle P_0,P_1,\ldots,P_m\rangle$ be a
subspace determining $\Lambda$ and suppose that
each $P_i$ is of weight $1$.
Write $\Pi_i=\langle P_0,\ldots,P_i\rangle$ and let
$\Lambda_i$ be the linear set determined by $\Pi_i$, with
corresponding image  $\cV_i$. Then we
have $\cV_0 \subsetneq \cV_1 \subsetneq \cdots \subsetneq \cV_{m-1}
\subsetneq \cV$. Hence, the dimension of $\cV$ is $m$.
\end{proof}

It has been shown in \cite{V},
that the image of a linear set of a $\PG(1,q^t)$
is a linear section of $\cV_{2t}$. We can now generalize this result.
\begin{theorem}
\label{t5}
The image of a linear set $\Lambda$
of rank $m+1$ is the intersection of $\cV_{rt}$
with a linear subspace of codimension
at most $\binom{rt-m-1}{t}$.
In particular, this image is the intersection of the images of
$\binom{rt-m-1}{t}$ linear sets of maximum rank.
\end{theorem}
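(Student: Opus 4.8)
The plan is to realise the image of $\Lambda$ as the trace on $\cV_{rt}$ of a single Schubert variety, and then to split the equations cutting it out by means of Proposition \ref{kernel}. First I would read $\Pi$ as the $(m+1)$--dimensional $\FF_q$--vector subspace of $V$ that it determines, and record the elementary observation that, since $\varepsilon_t$ is injective, a spread element $S\in\cS$ meets $\Pi$ in a point if and only if $\varepsilon_t(S)$ lies on the Schubert variety $\Omega(\Pi)$ of those $t$--subspaces of $V$ having non--trivial intersection with $\Pi$. As $\cV_{rt}=\varepsilon_t(\cS)$ and $\cV_{rt}\subseteq\cG_{rt,t}$, this yields the identity
\[ \varepsilon_t(\Lambda)=\cV_{rt}\cap\Omega(\Pi). \]
The standing hypothesis $m+1\leq rt-t$ guarantees that $\Omega(\Pi)$ is exactly of the form $\Omega(A_1)$ treated in Section 2, with $A_1=\Pi$ and $h=\dim A_1=m+1\leq n-k=rt-t$, so that the relevant counting results apply.

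Next I would invoke the Kleiman--Laksov description recalled after Theorem \ref{ut}, together with Proposition \ref{kernel}: the Schubert variety $\Omega(\Pi)$ is the complete intersection of $\cG_{rt,t}$ with a linear subspace $L_0$ of codimension $\binom{rt-h}{t}=\binom{rt-m-1}{t}$. Intersecting with $\cV_{rt}\subseteq\cG_{rt,t}$ then gives
\[ \varepsilon_t(\Lambda)=\cV_{rt}\cap L_0, \]
a linear section of $\cV_{rt}$. I would stress that the codimension is only bounded by $\binom{rt-m-1}{t}$, and not equal to it, because some of these $\binom{rt-m-1}{t}$ linear equations may vanish identically on the ambient $\PG(r^t-1,q)$ of $\cV_{rt}$ or be forced by the equations of $\cV_{rt}$ itself; this is precisely the source of the phrase ``at most''.

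For the refinement I would use the explicit basis produced in Proposition \ref{kernel}: the space of linear forms vanishing on $\Omega(\Pi)$ admits a basis $\overline{f}_1,\ldots,\overline{f}_N$, $N=\binom{rt-m-1}{t}$, in which each $\overline{f}_j$ is an alternating $t$--form whose kernel $A_j$ contains $\Pi$ and has dimension $rt-t$. Since these forms and their kernels are defined over $\FF_q$, each $A_j$ is a genuine $(rt-t)$--dimensional subspace of $\PG(rt-1,q)$ and hence defines a linear set of maximum rank. Writing $H_j=\{\overline{f}_j=0\}$ and using the dual description $(\bigwedge^t V)'\simeq\bigwedge^{rt-t}V$, the form $\overline{f}_j$ vanishes on $v_1\wedge\cdots\wedge v_t$ exactly when $\langle v_1,\ldots,v_t\rangle\cap A_j\neq 0$, so that $\cG_{rt,t}\cap H_j=\Omega(A_j)$ and therefore $\cV_{rt}\cap H_j$ is the image of the maximum--rank linear set defined by $A_j$. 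Because $L_0=\bigcap_{j=1}^N H_j$, I would conclude
\[ \varepsilon_t(\Lambda)=\cV_{rt}\cap L_0=\bigcap_{j=1}^{N}\bigl(\cV_{rt}\cap H_j\bigr), \]
which exhibits the image of $\Lambda$ as the intersection of the images of $N=\binom{rt-m-1}{t}$ linear sets of maximum rank.

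The hard part will be the careful bookkeeping of rationality and of the two nested ambient spaces. Concretely, one must run the Schubert calculus so that the kernels $A_j$ come out as honest $\FF_q$--subspaces (so that the associated maximum--rank linear sets are defined over the base field), and one must verify the identity $\cG_{rt,t}\cap H_j=\Omega(A_j)$ at the level of the pairing between $\bigwedge^t V$ and $\bigwedge^{rt-t}V$ — that is, that the form attached to $w_{t+1}\wedge\cdots\wedge w_{rt}$, where $A_j=\langle w_{t+1},\ldots,w_{rt}\rangle$, detects exactly the non--trivial intersections with $A_j$. This is the same computation that underlies Proposition \ref{kernel}, so I expect no genuine difficulty there; the only real subtlety is tracking how the $\binom{rt-m-1}{t}$ equations restrict from the full Plücker space $\PG(\binom{rt}{t}-1,q)$ down to $\PG(r^t-1,q)$ and then to $\cV_{rt}$, which is what prevents the codimension bound from being an equality.
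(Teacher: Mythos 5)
Your proposal is correct and follows essentially the same route as the paper: identify the image of $\Lambda$ with $\cV_{rt}\cap\Omega(W)$, use the Kleiman--Laksov codimension count for the Schubert variety, and then apply Proposition \ref{kernel} to obtain a basis of linear forms whose kernels are $(rt-t)$--dimensional subspaces containing $W$, each cutting out the image of a maximum--rank linear set. The paper's proof is just a terser version of this argument, leaving the rationality and pairing details you flag to the reader.
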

\begin{proof}
  Let $\Pi=\PG(W,q)$ be a defining subspace of $\PG(rt-1,q)$ for
  $\Lambda$.
  Write $\Omega=\Omega(W)$
  for the Schubert variety that is the Pl\"{u}cker
  embedding of the $t$-subspaces with non--trivial intersection with
  $W$. Then, the image of the linear set on $\cV_{rt}$ is $\Omega
  \cap \cV_{rt}$ and $\Omega$ is the complete intersection of the
  Grassmannian with a subspace of codimension $\binom{rt-m-1}{t}$.
  The statement now follows from Proposition \ref{kernel}.
\end{proof}

We now want to provide some insight on the space of all linear
equations vanishing on $\cV_{rt} \cap \Omega$.
Obviously, any subspace $\PG(m,q)$ of $\PG(rt-1,q) \subset \PG(rt-1,q^t)$
is determined by $n=rt-1-m$ independent $\FF_q$--linear equations. These can
always be
taken of the form
\begin{equation}
\label{eqns}
  \Tr(\displaystyle\sum_{i=0}^{r-1}a_{ji}x_i)=0,\qquad
  j=1,2,\ldots,n,
\end{equation}
where $\Tr:\FF_{q^t}\to \FF_q$ is
the usual trace function.

A spread element has non--empty
intersection with the $\PG(m,q)$ given by the equations in
\eqref{eqns} if, and only if, there exists a non--zero
$\lambda \in \FF_{q^t}$ such that
\[
  \Tr((\displaystyle\sum_{i=0}^{r-1}a_{ji}x_i)\lambda)=0 \qquad
  j=1,2,\ldots,n.
\]
In other words, this is the same as to require that
the $(rt-m-1)\times t$ matrix
$$
M=\begin{pmatrix}
  \displaystyle\sum_{i=0}^{r-1}a_{1i}x_i & (\displaystyle\sum_{i=0}^{r-1}a_{1i}x_i)^q & \cdots & (\displaystyle\sum_{i=0}^{r-1}a_{1i}x_i)^{q^{t-1}} \\
  \displaystyle\sum_{i=0}^{r-1}a_{2i}x_i & (\displaystyle\sum_{i=0}^{r-1}a_{2i}x_i)^q & \cdots & (\displaystyle\sum_{i=0}^{r-1}a_{2i}x_i)^{q^{t-1}} \\
  \cdots & \cdots & \cdots & \cdots \\
  \displaystyle\sum_{i=0}^{r-1}a_{ni}x_i & (\displaystyle\sum_{i=0}^{r-1}a_{ni}x_i)^q & \cdots & (\displaystyle\sum_{i=0}^{r-1}a_{ni}x_i)^{q^{t-1}}
\end{pmatrix}
$$
cannot have full rank; thus, each of its minors of order $t$
must be singular.
This condition corresponds to
a set of $\binom{rt-m-1}{t}$ equations,
each of them determining a hyperplane section of $\cV_{rt}$.
We remark that, as we expect from Proposition \ref{kernel},
every set of $t$ equations in \rif{eqns} determines a $(rt-t-1)$--dimensional
subspace containing $\PG(m,q)$, hence a linear set of maximum
rank containing the given one.

Clearly, not all of the equations obtained above are always
linearly independent of $\cV_{rt}$.
For instance, if there were a minor $M_0$ of order
$t-1$ in $M$ which is non--singular for any
choice of $x_i\neq 0$, then $rt-m-t$ equations would
suffice.

The rest of this paper is devoted to investigate
the dimension the space of the linear equations
vanishing on the image of a linear set on $\cV_{rt}$.
As we have already remarked, for any fixed rank $m+1 >r$,
there are many non--equivalent linear sets; here we propose
an unifying approach for linear sets of the same rank.

Let $\Pi=\PG(W,q)$ be a $m$--subspace defining a linear set of $\PG(rt-1,q)$, $\PG(W^*,q^t)$ be the $m$--dimensional projective
subspace of $\PG(rt-1,q^t)$ such that
$\PG(W^*,q^t) \cap \PG(rt-1,q)=\Pi$, and
$\Omega^*=\Omega(W^*) \subset \cG_{rt\,t}^*$ be
the Schubert variety of the $t$--subspaces with non--trivial
intersection with $W^*$.
Let also $\Sigma$ be the Segre variety of the simple tensors
of $U_0\otimes U_1 \otimes \cdots \otimes U_{t-1}$; recall that we
can identify $\Sigma$ with the set of
$\{u_0 \wedge u_1 \wedge \cdots \wedge u_{t-1}, u_i \in U_i\}$ in
$\bigwedge V^t$.
The lifting $\sigma^*$ of the $\FF_q$--linear collineation $\sigma$ to
$\PG(\binom{rt}{t}-1,q^t)$ acts as $\sigma^*(v_1 \wedge v_2 \wedge
\cdots \wedge v_t)=v_1^{\sigma} \wedge v_2^{\sigma} \wedge \cdots
\wedge v_t^{\sigma}$.
As $\sigma$ permutes the $U_i$'s,
$\sigma^*$ fixes $\Sigma$ set-wise.
Since $W^*$ is also fixed set-wise by $\sigma$, see Lemma \ref{subspace}, we
see
that $\Omega^*$ is set-wise fixed by $\sigma^*$.
Lemma \ref{subspace} guarantees
$\dim_{\FF_{q}}
\Omega \cap \cV_{rt}=\dim_{\FF_{q^t}} \Omega^* \cap \Sigma$, hence we shall
determine
$\dim_{\FF_{q^t}} \Omega^* \cap \Sigma$.

As there exists an embedding
$\phi:U\otimes U^{\sigma} \otimes \cdots \otimes U^{\sigma^{t-1}}\to\bigwedge^t V$,
there is also a canonical projection
$\phi':(\bigwedge^t V)'\rightarrow (U\otimes U^{\sigma} \otimes \cdots \otimes
U^{\sigma^{t-1}})'$, where $(\bigwedge^t V)'$ and
$(U\otimes U^{\sigma} \otimes \cdots \otimes
U^{\sigma^{t-1}})'$ are the duals of respectively  $\bigwedge^t V$ and
$U\otimes U^{\sigma} \otimes \cdots \otimes
U^{\sigma^{t-1}}$.
Let $\mathcal{F}$ be the subspace of $(\bigwedge^t V)'$ consisting
of the linear
functions vanishing on $\Omega^*$, and let $\phi'_/$ be the restriction of
$\phi'$ to $\mathcal{F}$. We are interested in the dimension of the
image of $\phi'_/$. The nucleus of $\phi_/'$ consists of the $t$--linear
alternating forms $f$ such that $\ker f$ contains $W^*$ and
$f(u_0,u_1,\ldots,u_{t-1})=0$ for all $u_i \in U_i$. Such a space is
isomorphic to the space of the $t$--linear forms $\overline{f}$
defined on a subspace $W^{\natural}$ complement of $W^*$ in $V$, with
$\overline{f}(\overline{u_1},\overline{u_2},\ldots,\overline{u_t})=0$
for all $\overline{u_i} \in \overline{U_i}$, where $\Ui$ is the
projection of $U_i$  on $W^{\natural}$ from $W^*$.

Observe that $\dim
\overline{U_i}=\dim \langle U_i,W^*\rangle\cap W^{\natural}=\dim\langle U_i,W^*\rangle+\dim W^{\natural}-\dim\langle U_i,W^*,W^{\natural}\rangle= \dim U_i-\dim (U_i\cap W^*)$, so $W^{*\sigma}=W^*$ and
$U_{i+1}=U_i^{\sigma}$ imply that $\dim \overline{U_i}=\dim
\overline{U_0}$, for all $i=1,\ldots,t-1$.

\begin{proposition}\label{projection2}
  We have $\dim U_i \cap W^*= h>0$ if, and only if,
  the linear set $\Lambda$ contains a
  $\FF_{q^t}$--projective subspace of dimension $h-1$. If the linear
  set is proper, that is it spans\- $\PG(r-1,q^t)$\- but\- it is not
  $\PG(r-1,q^t)$, this can occur only for $r\geq 3$.
  Furthermore, $h \leq \frac{m+1-r}{t-1}$  in general and
  $h = m+1-r$ if
  $t=2$.
\end{proposition}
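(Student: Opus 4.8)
The plan is to handle the two implications and the numerical bounds separately, throughout exploiting the remark just before the statement that $\dim_{\FF_{q^t}}(U_i\cap W^*)$ is independent of $i$ (because $W^*$ is fixed by $\sigma$ and $U_{i+1}=U_i^\sigma$). This lets me work with $i=0$ and transport everything by powers of $\sigma$.

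First, the ``if'' direction, which I expect to be short. Recall that a point $\langle u\rangle$ of $\PG(r-1,q^t)=\PG(U_0,q^t)$ lies in $\Lambda$ exactly when its spread element $\Pi^*_u=\langle u,u^\sigma,\dots,u^{\sigma^{t-1}}\rangle$ meets $W^*$: both $\Pi^*_u$ and $W^*$ are fixed set-wise by $\sigma$, so by Lemma \ref{subspace} their intersection meets the subgeometry $\PG(rt-1,q)$ in a subspace of the same dimension, and this is nonempty iff $\Pi^*_u\cap W^*\neq 0$. Now if $0\neq u\in U_0\cap W^*$ then $u$ itself lies in $\Pi^*_u\cap W^*$, so $\langle u\rangle\in\Lambda$; hence $\PG(U_0\cap W^*,q^t)\subseteq\Lambda$ is a projective subspace of dimension $h-1$. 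I would also record here that it sits in $\Lambda$ with full weight: its field reduction is $\bigoplus_i(U_i\cap W^*)=\bigoplus_i(U_0\cap W^*)^{\sigma^i}$, which is contained in $W^*$.

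For the converse I would argue by field reduction inside the $\sigma$-invariant subspace spanned by a contained subspace. If $\Lambda$ contains $S=\PG(Y,q^t)$ with $Y\subseteq U_0$ and $\dim_{\FF_{q^t}}Y=h$, set $\widehat Y=Y\oplus Y^\sigma\oplus\cdots\oplus Y^{\sigma^{t-1}}$, a $\sigma$-invariant $th$-dimensional space meeting each $U_i$ in $Y^{\sigma^i}$; note $U_0\cap\widehat Y=Y$. Restricting the scroll to $Y$ reproduces the whole construction with $r$ replaced by $h$, and $S\subseteq\Lambda$ translates into: every scroll element of this sub-scroll meets $Z:=W^*\cap\widehat Y$. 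The goal is to deduce $\widehat Y\subseteq W^*$, i.e.\ $Z=\widehat Y$, for then $Y=U_0\cap\widehat Y\subseteq U_0\cap W^*$ and $\dim(U_0\cap W^*)\ge h$, which together with the ``if'' direction forces equality. I expect this to be the main obstacle: the purely set-theoretic covering of $\PG(Y,q^t)$ only yields the weak rank estimate $\dim_{\FF_{q^t}}Z\ge t(h-1)+1$, which does not by itself give $Z=\widehat Y$. The point needing care is that one must work with subspaces contained in $\Lambda$ with full weight (equivalently $\widehat Y\subseteq W^*$), and argue that the subspace produced by the ``if'' direction is the maximal such, so that an accidental full covering by a lower-weight configuration cannot enlarge $U_i\cap W^*$.

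Finally, the numerical bounds follow from rank--nullity applied to the projection $\pi_0\colon W^*\to U_0$. Since $\Lambda$ is proper it spans $\PG(r-1,q^t)$, so $\pi_0(W^*)=U_0$ has dimension $r$; its kernel is $W^*\cap(U_1\oplus\cdots\oplus U_{t-1})$, which contains $\bigoplus_{i=1}^{t-1}(U_i\cap W^*)$ of dimension $(t-1)h$. Hence $m+1\ge\dim_{\FF_{q^t}}W^*\ge r+(t-1)h$, giving $h\le\frac{m+1-r}{t-1}$. When $t=2$ the kernel equals $U_1\cap W^*$, so $\dim_{\FF_{q^t}}W^*=r+h$, and provided $\dim_{\FF_{q^t}}W^*=m+1$ (which is where the minimality of $\Pi$ from Lemma \ref{min_m} enters) this sharpens to $h=m+1-r$. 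The assertion that $h>0$ forces $r\ge 3$ then drops out by combining $h\ge 1$, i.e.\ $m+1\ge r+t-1$, with the standing properness bound $m+1\le t(r-1)$: these give $r+t-1\le tr-t$, whence $r\ge(2t-1)/(t-1)>2$.
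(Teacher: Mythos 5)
Your treatment of the forward implication and of the numerical bounds is correct, and on the bounds you take a genuinely cleaner route than the paper. Where the paper writes $m+1=ht+k$, splits off $\bigoplus_i Z^{\sigma^i}\subseteq W^*$ and argues that the residual linear set $\Lambda_1$ (the base of the cone with vertex $\PG(h-1,q^t)$) must itself be proper, forcing $k\ge r-h$, you obtain the same inequality $m+1\ge r+(t-1)h$ in one line from rank--nullity applied to $\pi_0|_{W^*}$, using only the surjectivity of the projection onto $U_0$ --- which both proofs extract from properness in the same way. The $t=2$ equality and the deduction $r\ge 3$ from $r+t-1\le m+1\le t(r-1)$ coincide with the paper's. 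One small correction: $\dim_{\FF_{q^t}}W^*=m+1$ holds by construction of $W^*$ via Lemma \ref{subspace}, independently of the minimality of $\Pi$; minimality is only what guarantees that $m+1$ equals the rank of $\Lambda$.

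The converse implication that you flag as the main obstacle is a legitimate concern, but note that the paper does not prove it either: its proof only shows that $\dim U_i\cap W^*=h>0$ produces a $\PG(h-1,q^t)$ inside $\Lambda$ (indeed one whose entire field reduction lies in $\PG(W,q)$) and then passes directly to the bound on $h$. Your weak estimate $\dim_{\FF_{q^t}}Z\ge t(h-1)+1$ is exactly what set-theoretic containment yields, and it does not force $\widehat Y\subseteq W^*$; for instance, any solid inside the field reduction $\PG(5,q)$ of a line of $\PG(2,q^3)$ already meets every spread plane of that $\PG(5,q)$, so a subspace can cover all points of a line without containing its full field reduction. The equivalence is immediate if ``contains'' is read as ``contains with maximal weight'' (then $\widehat Y\subseteq W$ gives $Y\subseteq U_0\cap W^*$ at once), which is what the paper's forward construction actually produces; and nothing in the remainder of the paper uses the converse direction --- only the upper bound on $h$, which your argument does establish. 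So your proposal proves everything the paper's proof proves, and is honest about the one implication that the paper asserts but leaves unargued.
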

\begin{proof}
  A proper linear set $\Lambda$, when considered as a subset of $\PG(r-1,q^t)$,
  spans the whole projective space; hence, the projection of $\Pi=PG(W,q)$ on
  $\PG(U_0,q^t)=\PG(r-1,q^t)$ necessarily
  spans $\PG(U_0,q^t)$. It follows
  that the projection of $\PG(W^*,q^t)$ also spans $\PG(U_0,q^t)$.
  For $t=2$,
  this implies that
  $\dim U_1 \cap  W^*=m+1-r$ and $m+1-r>0$ can occur only if $r \geq 3$,
  since $r \leq m+1 \leq t(r-1)$.

   Suppose now $t>2$ and
   let $Z=U_i\cap W^*$; then,
   $\langle Z^{\sigma^i}, i=0,\ldots,t-1\rangle \subseteq W^*$.
   For any $P\in\PG(Z,q^t)$,
   the projective $(t-1)$--space
   $\langle P,P^{\sigma},\ldots,P^{\sigma^{t-1}}\rangle \cap \PG(rt-1,q)$
   is a spread element completely contained in $\PG(W,q)$. In particular,
   $\PG(W,q)$
   contains a subspace of dimension $ht-1$ completely partitioned
   by spread elements. Thus there exists
   a projective subspace $\PG(h-1,q^t)$ completely contained in
   the linear set $\Lambda$. Write $m+1=ht+k$ and let $W^*_1$ be a subspace of
   dimension $k$ disjoint from $\langle
   Z^{\sigma^i},i=0,\ldots,t-1\rangle \subseteq W^*$. Then $\Lambda$ is a
   cone with vertex a $\PG(h-1,q^t)$ and base
   $\Lambda_1$, with $\Lambda_1$  the
   linear set induced by $W_1:=W^*_1 \cap \PG(rt-1,q)$.
   In order to have a proper linear set,
   we need $\dim \langle \Lambda_1 \rangle =r-h$ and $r-h >0$, so
   $k \geq r-h$; hence, $ht \leq m+1-r+h$. Since $m+1 \leq rt-t$,
we have $h \leq \frac{m+1-r}{t-1}$. We can have $h >0$
  only if $m+1 \geq t-1+r$, but we also have $m+1 \leq rt-t$, hence we get $rt-t \geq t-1+r$ and so $r \geq 3$.
\end{proof}

\begin{theorem}
Let $c:=\dim \overline{U_i}$. The map $\phi_/$  is injective
if, and only if, $m+1 > rt-t-c$. This is always the case for $t=2$,
$(r,t)=(2,3)$ and for $t \geq 3$ with $m+1 > tr-t-1-\frac{2}{t-2}$.
\end{theorem}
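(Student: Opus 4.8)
The plan is to study $\ker \phi'_/$, since $\phi'_/$ is injective exactly when this kernel is trivial. By the discussion preceding the statement, $\ker \phi'_/$ is isomorphic to the space of alternating $t$--linear forms $\overline f$ on $W^{\natural}\cong V/W^*$ that vanish on $\Uz\times\overline{U_1}\times\cdots\times\overline{U_{t-1}}$. Reading $\overline f$ as a linear functional on $\bigwedge^t W^{\natural}$, this is precisely the statement that $\overline f$ annihilates the subspace
\[ S:=\langle\, \overline{u_0}\wedge\overline{u_1}\wedge\cdots\wedge\overline{u_{t-1}} \mid \overline{u_i}\in\Ui \,\rangle \subseteq \textstyle\bigwedge^t W^{\natural}. \]
Hence $\ker\phi'_/\cong S^{\circ}$, and $\phi'_/$ is injective if and only if $S=\bigwedge^t W^{\natural}$, i.e. the decomposables having one factor in each $\Ui$ span the whole exterior power. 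Since $\dim\Ui=c$ for every $i$ and $\dim W^{\natural}=rt-m-1$, the $\Ui$ share the common codimension $d:=rt-m-1-c$, and the desired inequality $m+1>rt-t-c$ is exactly $d\le t-1$. I record two structural facts for later use: $\sum_i\Ui=W^{\natural}$, because $U_0\oplus\cdots\oplus U_{t-1}=V$ projects onto $V/W^*$; and $\Ui=\bar\sigma^{\,i}(\Uz)$, where $\bar\sigma$ is induced by $\sigma$ on $V/W^*$ (legitimate, as $W^*$ is $\sigma$--invariant), so that the $\Ui$ form a single $\bar\sigma$--orbit.

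The easy implication is $m+1\le rt-t-c\Rightarrow\phi'_/$ is not injective, equivalently $d\ge t\Rightarrow S^{\circ}\ne 0$. As $\dim\Uz^{\circ}=d\ge t$, I would pick linearly independent functionals $\ell_1,\dots,\ell_t\in\Uz^{\circ}$ and set $\overline f:=\ell_1\wedge\cdots\wedge\ell_t\ne 0$. For any tuple with $\overline{u_0}\in\Uz$ the scalar $\overline f(\overline{u_0},\dots,\overline{u_{t-1}})$ is the determinant of a matrix whose row indexed by $\overline{u_0}$ is zero, hence it vanishes; thus $\overline f\in S^{\circ}\setminus\{0\}$. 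This step uses nothing beyond $\dim\Uz^{\circ}=d$.

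The converse $d\le t-1\Rightarrow S=\bigwedge^t W^{\natural}$ is the core of the theorem and the step I expect to be the main obstacle. It is genuinely false for arbitrary subspaces of codimension $\le t-1$ with full sum---for instance, two coincident $\Ui$ already produce a nonzero annihilator---so the single--orbit hypothesis must be exploited, and the delicate point is to show that the orbit cannot degenerate while $d\le t-1$. When the $\Ui$ are pairwise distinct I would establish $S=\bigwedge^t W^{\natural}$ by fixing a basis compatible with the $\Ui$ and extracting, from the multilinear dependence of a product $\overline{u_0}\wedge\cdots\wedge\overline{u_{t-1}}$ on its factors, each multi--graded component by scaling the factors one at a time and taking combinations; the bound $d\le t-1$ keeps every $\Ui$ large enough for these components to exhaust a basis of $\bigwedge^t W^{\natural}$. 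The real difficulty is the non--faithful orbits, where $\bar\sigma^{\,e}(\Uz)=\Uz$ for some $0<e<t$; these must be controlled using the minimality of the defining subspace (Lemma \ref{min_m}) together with Proposition \ref{projection2}, which bound the intersection dimension $\dim(U_i\cap W^*)=r-c$ and hence the possible periods of the orbit. Turning this into a uniform argument, so that $S^{\circ}=0$ is forced for \emph{every} admissible orbit with $d\le t-1$ rather than merely for the configurations where the factors decouple, is where the weight of the proof lies.

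Finally I would read off the three guaranteed regimes from Proposition \ref{projection2}, in each case checking the inequality $d\le t-1$ and then invoking the equivalence above. For $t=2$ the proposition gives $\dim(U_i\cap W^*)=m+1-r$, so $c=r-(m+1-r)=2r-m-1=\dim W^{\natural}$, i.e. $d=0$ and $\Uz=\Ui=W^{\natural}$, and injectivity is immediate. For $t\ge 3$ the bound $\dim(U_i\cap W^*)\le\frac{m+1-r}{t-1}$ yields $c\ge r-\frac{m+1-r}{t-1}$, so $m+1>rt-t-c$ is implied by $m+1>rt-t-r+\frac{m+1-r}{t-1}$; multiplying through by $t-2>0$ and simplifying converts this into the stated threshold $m+1>tr-t-1-\frac{2}{t-2}$. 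The remaining case $(r,t)=(2,3)$ has $r=2$, which forces $\dim(U_i\cap W^*)=0$ and $c=2$, so the condition reads $m+1>1$ and holds throughout the admissible range $m\ge 1$; hence $\phi'_/$ is injective there as well.
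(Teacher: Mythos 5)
Your proposal reproduces the paper's architecture faithfully in every part except the one that matters most. The identification of $\ker\phi_/$ with the annihilator of $S=\langle \overline{u_0}\wedge\cdots\wedge\overline{u_{t-1}}\mid \overline{u_i}\in\Ui\rangle$ is exactly the paper's starting point. Your non--injectivity argument for $d:=rt-m-1-c\geq t$ (wedge together $t$ independent functionals annihilating $\Uz$) is the same mechanism as the paper's, which instead counts: the forms vanishing on the Schubert variety of $t$--spaces meeting $\Uz$ non--trivially form, by Proposition \ref{kernel}, a space of dimension $\binom{rt-m-1-c}{t}\geq 1$, and all of them kill $S$. Your derivation of the three guaranteed regimes from Proposition \ref{projection2}, including the algebra turning $m+1>rt-t-\frac{rt-m-1}{t-1}$ into $m+1>tr-t-1-\frac{2}{t-2}$, matches the paper's computation exactly.

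The gap is the forward implication $m+1>rt-t-c\Rightarrow\ker\phi_/=0$, which you explicitly leave unproved (``where the weight of the proof lies''). The paper's own proof of this direction is a one--line dimension count: when $t+c>rt-m-1$, i.e. $d\leq t-1$, every $t$--dimensional subspace of $W^{\natural}$ meets every $\Ui$ non--trivially, so the vanishing condition is imposed at the Pl\"ucker point of \emph{every} $t$--subspace; these points span $\bigwedge^t W^{\natural}$, hence $f\equiv 0$. You should at minimum record that this is the intended argument, since it is what closes the equivalence. That said, your hesitation is not misplaced: the paper's line silently equates ``$f$ vanishes on $S$'' with ``$f$ vanishes on every $t$--space meeting each $\Ui$ non--trivially'', and for arbitrary subspaces of codimension at most $t-1$ with full sum these conditions are genuinely different --- e.g.\ for $t=3$ and $A_0=A_1$ of codimension $2$ in $N$, the form $\ell\wedge\mu_1\wedge\mu_2$ with $\mu_1,\mu_2$ annihilating $A_0$ kills all $a_0\wedge a_1\wedge a_2$ with $a_0,a_1\in A_0$ yet is nonzero. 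So the spanning statement does lean on the fact that the $\Ui$ are equidimensional and form a single $\bar\sigma$--orbit, and the periodic case $\bar\sigma^{\,e}(\Uz)=\Uz$ with $0<e<t$, which you correctly single out, is exactly what neither your sketch nor the paper's one--liner verifies. As submitted, your proposal establishes only one direction of the stated equivalence plus the corollaries that follow from it.
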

\begin{proof}
The kernel of $\phi_/$ is
the space of
the  alternating $t$--linear forms defined on the vector space $W^{\natural}$ of
dimension $rt-m-1$ and such that $f(u_0,u_1,\ldots,u_{t-1})=0$
$\forall u_i \in \overline{U_i}$ or, equivalently,
the space of of the linear forms defined on
$\bigwedge ^t W^{\natural}$  vanishing on
all the points that are the Pl\"{u}cker embedding of
a $t$--space with non-trivial
intersection with each $\overline{U_i}$.
For $t+c > rt-m-1$, every
$t$--subspace intersects every $\overline{U_i}$ non--trivially. This implies $f\equiv 0$ and $\phi$ is injective.
 By Proposition \ref{projection2}, $\frac{rt-m-1}{t-1}
\leq c \leq r$ and $c=2r-m-1$ for $t=2$. Hence, when $t=2$, the
condition $m+1 > rt-t-c=2r-2-2r+m+1$ is always fulfilled. Suppose
now $t \geq 3$. By Proposition \ref{projection2},
we have $rt-t-c \leq rt-t-\frac{rt-m-1}{t-1}$; hence,
$m+1 > rt-t-\frac{rt-m-1}{t-1}$ implies $m+1 > rt-t-c$. Thus,
$m+1 > rt-t-\frac{rt-m-1}{t-1}$ if, and only if,
$m+1 > rt-t-1-\frac{2}{t-2}$. When $t=3$, this is equivalent to $m+1 > 3r-6$,
a condition
which is obviously always fulfilled
for $r=2$.

 If $t+c \leq rt-m-1$, then the image via the Pl\"{u}cker embedding of the
$t$--spaces with non--trivial intersection with a $\overline{U_i}$ is a
Schubert variety cut on the Grassmannian by
a linear subspace of codimension
$\binom{rt-m-1-c}{t}$; hence, the dimension of the kernel of the map
$\phi_/$ is at least $\binom{rt-m-1-c}{t} \geq 1$.
\end{proof}

\begin{corollary}
Let $\PG(W,q) \subset \PG(rt-1,q)$ be the $m$--dimensional
subspace
defining a linear set $\Lambda$ and $\PG(W^*,q^t)$ be the unique subspace
of $\PG(rt-1,q^t)$ such that $\PG(W^*,q^t)\cap \PG(rt-1,q)=\PG(W,q)$.
Take $W^{\natural}$ such that $V(rt,q^t)=W^*\oplus W^{\natural}$ and
let also $\Ui$ be the
projection of $U_i$ on $W^{\natural}$. Write $c=\dim\Ui$.
Then, the image of $\Lambda$ is
the complete intersection of $\cV_{rt}$ with a linear subspace of
codimension $\binom{rt-m-1}{t}$ if, and only if, $m+1 > rt-t-c$. This is
always the case for $t=2$, $(r,t)=(2,3)$ and for $t \geq 3$ and $m+1 >
tr-t-1-\frac{2}{t-2}$. If $m+1 \leq rt-t-c$, then the image of
$\Lambda$ is the complete intersection of $\cV_{rt}$ with a linear
subspace of codimension $\dim \langle u_0 \wedge u_1 \wedge \ldots
\wedge u_{t-1}, u_i \in \overline{U_i}\rangle <\binom{rt-m-1}{t}$.
\end{corollary}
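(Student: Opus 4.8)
The plan is to read the codimension of the cutting subspace directly off the dimension of the image of $\phi'_/$, and then to invoke the preceding Theorem to decide when this dimension attains its generic value $\binom{rt-m-1}{t}$.

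First I would recall, from the discussion preceding the preceding Theorem, that the image of $\Lambda$ on $\cV_{rt}$ is $\Omega^*\cap\Sigma$, and that $\Omega^*$ is cut out of the Grassmannian by the space $\cF\subset(\bigwedge^t V)'$ of linear forms vanishing on it. Since restricting these forms to the subspace $U_0\otimes U_1\otimes\cdots\otimes U_{t-1}$ carrying $\Sigma$ is exactly the map $\phi'$, the image of $\Lambda$ is the section of $\cV_{rt}$ by the linear subspace annihilated by $\phi'(\cF)$, the image of $\phi'_/$. Consequently the codimension of that cutting subspace equals the dimension of the image of $\phi'_/$. Lemma \ref{subspace} is what legitimises passing between the $\FF_q$--picture $\Omega\cap\cV_{rt}$ and the $\FF_{q^t}$--picture $\Omega^*\cap\Sigma$ without changing this codimension.

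Next I would compute the dimension of the image of $\phi'_/$ by rank--nullity together with duality. By Proposition \ref{kernel}, $\dim\cF=\binom{rt-m-1}{t}$, and by the identification described before the preceding Theorem the kernel of $\phi'_/$ is the space of alternating $t$--forms on $W^{\natural}$ vanishing on every decomposable $\overline{u_0}\wedge\cdots\wedge\overline{u_{t-1}}$ with $\overline{u_i}\in\Ui$; applying Theorem \ref{ut} this says precisely that $\ker\phi'_/$ is the annihilator in $(\bigwedge^t W^{\natural})'$ of the subspace
\[ L:=\langle u_0\wedge u_1\wedge\cdots\wedge u_{t-1}, u_i\in\Ui\rangle. \]
Hence $\dim\ker\phi'_/=\binom{rt-m-1}{t}-\dim L$, and therefore the image of $\phi'_/$ has dimension $\dim L$. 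This already pins the codimension of the cutting subspace to $\dim L=\dim\langle u_0\wedge\cdots\wedge u_{t-1}, u_i\in\Ui\rangle$ in every case.

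Finally I would conclude by comparing $\dim L$ with $\binom{rt-m-1}{t}$. The section is a complete intersection of the generic codimension $\binom{rt-m-1}{t}$ exactly when $L=\bigwedge^t W^{\natural}$, i.e. when $\phi'_/$ is injective; by the preceding Theorem this is equivalent to $m+1>rt-t-c$, and the enumerated cases ($t=2$; $(r,t)=(2,3)$; and $t\geq3$ with $m+1>tr-t-1-\tfrac{2}{t-2}$) are precisely those already isolated there. When $m+1\leq rt-t-c$ the map $\phi'_/$ fails to be injective, so $\dim L<\binom{rt-m-1}{t}$ and the codimension is the strictly smaller number $\dim L$. The step I expect to demand the most care is the first one: checking that restriction of the defining forms of $\Omega^*$ to $U_0\otimes\cdots\otimes U_{t-1}$ yields exactly the forms cutting the image of $\Lambda$ out of $\cV_{rt}$, so that no independence is lost beyond what $\ker\phi'_/$ records, and that the $\FF_q$/$\FF_{q^t}$ transition via Lemma \ref{subspace} genuinely preserves this codimension.
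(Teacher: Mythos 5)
Your argument is correct and is essentially the same route the paper takes: the corollary is stated there without proof as an immediate consequence of the preceding theorem, and your reconstruction via rank--nullity for $\phi'_/$ --- identifying $\ker\phi'_/$ with the annihilator of $\langle u_0\wedge\cdots\wedge u_{t-1},\ u_i\in\Ui\rangle$ in $(\bigwedge^t W^{\natural})'$ so that the codimension of the cutting subspace equals $\dim\langle u_0\wedge\cdots\wedge u_{t-1},\ u_i\in\Ui\rangle$ --- is exactly the mechanism set up in the discussion before that theorem and in Proposition \ref{kernel}.
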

We can provide a complete description for the case $t=3$.
\begin{theorem}
\label{t=3}
Let $t=3$, $r>2$ and $m+1 \leq 3r-3-c$.
Then, the codimension of
$\langle u_0 \wedge u_1 \wedge u_{2}, u_i \in \overline{U_i}\rangle$
in $\bigwedge ^{t}W^{\natural}$ is
$3\binom{3r-m-1-c}{3}$.
\end{theorem}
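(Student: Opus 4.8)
The plan is to pass to the dual and compute a dimension. Put $N=\dim W^{\natural}=3r-m-1$ and $d=N-c$; the hypothesis $m+1\le 3r-3-c$ says exactly $d\ge 3$. Under the perfect pairing of $\bigwedge^{3}W^{\natural}$ with its dual, the codimension of $\langle u_0\wedge u_1\wedge u_2\rangle$ equals $\dim\cN$, where
\[
\cN=\ker\phi'_/=\{\,f\in(\textstyle\bigwedge^{3}W^{\natural})' : f(v_0,v_1,v_2)=0\ \text{for all } v_i\in\overline{U_i}\,\}.
\]
So the task is to prove $\dim\cN=3\binom{d}{3}$.

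First I would pin down the mutual position of $\overline{U_0},\overline{U_1},\overline{U_2}$ in $W^{\natural}$. Each has dimension $c$, hence codimension $d$, and I claim the pairwise sums fill the ambient space: $\overline{U_i}+\overline{U_j}=W^{\natural}$ for $i\ne j$. This is the point at which the Desarguesian structure is used: from the field--reduction description $W^{*}=\langle(\mathbf{x},\mathbf{x}^{q},\mathbf{x}^{q^{2}}):\mathbf{x}\in M\rangle_{\FF_{q^{3}}}$ together with the properness of $\Lambda$ (so that $M$ spans $\FF_{q^{3}}^{r}$ over $\FF_{q^{3}}$) one gets $\dim\big((U_i\oplus U_j)\cap W^{*}\big)=m+1-r$, and projecting modulo $W^{*}$ gives $\dim(\overline{U_i}+\overline{U_j})=2r-(m+1-r)=N$. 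For each $i$ set $F_i=\bigwedge^{3}\mathrm{Ann}(\overline{U_i})\subseteq(\bigwedge^{3}W^{\natural})'$. A decomposable form vanishes as soon as one of its arguments lies in $\overline{U_i}$, so $F_i\subseteq\cN$, and by Proposition~\ref{kernel} (the single Schubert condition already used to describe $\cV_{rt}$) $\dim F_i=\binom{N-c}{3}=\binom{d}{3}$. Thus the theorem reduces to the assertion $\cN=F_0\oplus F_1\oplus F_2$.

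For independence I argue as follows. The pairwise intersections vanish since, by the elementary identity $\bigwedge^{3}P\cap\bigwedge^{3}Q=\bigwedge^{3}(P\cap Q)$,
\[
F_i\cap F_j=\textstyle\bigwedge^{3}\big(\mathrm{Ann}(\overline{U_i})\cap\mathrm{Ann}(\overline{U_j})\big)=\bigwedge^{3}\mathrm{Ann}(\overline{U_i}+\overline{U_j})=\bigwedge^{3}\mathrm{Ann}(W^{\natural})=0 .
\]
For the triple overlap $F_0\cap(F_1+F_2)=0$, use the splitting $(W^{\natural})'=\mathrm{Ann}(\overline{U_1})\oplus\mathrm{Ann}(\overline{U_2})\oplus G$ furnished by $\overline{U_1}+\overline{U_2}=W^{\natural}$. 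An element $\omega$ of the overlap is simultaneously a sum of a pure $\mathrm{Ann}(\overline{U_1})$--form and a pure $\mathrm{Ann}(\overline{U_2})$--form (so all its mixed and $G$--components vanish) and an element of $\bigwedge^{3}D$ with $D=\mathrm{Ann}\big(\overline{U_0}+(\overline{U_1}\cap\overline{U_2})\big)$. Since $\overline{U_0}+\overline{U_1}=\overline{U_0}+\overline{U_2}=W^{\natural}$, the space $D$ meets each of $\mathrm{Ann}(\overline{U_1})$ and $\mathrm{Ann}(\overline{U_2})$ only in $0$, so it is the graph of an injective map between them; a direct computation then shows that the map sending a vector of $\bigwedge^{3}D$ to its mixed $\big(\bigwedge^{2}\mathrm{Ann}(\overline{U_1})\big)\wedge\mathrm{Ann}(\overline{U_2})$ component is injective (a Koszul--type argument). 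As $\omega$ has vanishing mixed component, $\omega=0$. Hence $\dim(F_0+F_1+F_2)=3\binom{d}{3}$.

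What remains, and what I expect to be the main obstacle, is the reverse inclusion $\cN\subseteq F_0+F_1+F_2$, equivalently the \emph{exactness} statement $\overline{U_0}\wedge\overline{U_1}\wedge\overline{U_2}=\bigcap_{i}\big(\overline{U_i}\wedge\bigwedge^{2}W^{\natural}\big)$; note $\big(\bigcap_i\overline{U_i}\wedge\bigwedge^{2}W^{\natural}\big)^{\perp}=F_0+F_1+F_2$ by the previous paragraph, so this is purely a dimension count and amounts to $\dim(\overline{U_0}\wedge\overline{U_1}\wedge\overline{U_2})\ge\binom{N}{3}-3\binom{d}{3}$. I would establish it by choosing a basis of $W^{\natural}$ adapted to the flag $\overline{U_1}\cap\overline{U_2}\subset\overline{U_1},\overline{U_2}$ and to the graph description of $\overline{U_0}$, writing the conditions $f(\overline{U_0},\overline{U_1},\overline{U_2})=0$ as a linear system in the Pl\"ucker coordinates $f_{abc}$ and eliminating: the vanishing propagates (exactly as in the worked cases $d=3$) and kills every coordinate except those of the three pure families, leaving precisely $3\binom{d}{3}$ free parameters. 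The delicate part is the bookkeeping of this elimination for general $d$, namely checking that no further coordinate survives and that the propagation never forces a relation between two different families; an alternative route is to iterate the two--subspace identity $A\wedge\bigwedge^{2}V\cap B\wedge\bigwedge^{2}V=A\wedge B\wedge V+(A\cap B)\wedge\bigwedge^{2}V$ and then intersect with the third condition, again using $\overline{U_i}+\overline{U_j}=W^{\natural}$ to discard the lower--order terms. Either way one obtains $\dim\cN=3\binom{d}{3}$, as claimed.
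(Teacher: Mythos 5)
Your overall strategy coincides with the paper's: both proofs dualize, identify for each $i$ the space $\cF_i=\bigwedge^3\mathrm{Ann}(\Ui)$ of alternating forms whose kernel contains $\Ui$ (equivalently, the forms vanishing on the Schubert variety $\Omega(\Ui)$, each of dimension $\binom{3r-m-1-c}{3}$ by Proposition~\ref{kernel}), use the properness of $\Lambda$ to get $\langle\Ui,\Uj\rangle=W^{\natural}$ for $i\neq j$, and then show that the sum $\cF_0+\cF_1+\cF_2$ is direct. Where you differ is in the execution of the independence step: the paper disposes of a putative relation $f_0+f_1+f_2=0$ with all $f_i\neq 0$ by fixing the last argument $u_2\in\overline{U_2}$, noting that $f_2(\cdot,\cdot,u_2)=0$ and hence that the bilinear form $f_0(\cdot,\cdot,u_2)=-f_1(\cdot,\cdot,u_2)$ has kernel containing $\langle\Uz,\overline{U_1}\rangle=W^{\natural}$, so that $\ker f_0\supseteq\langle\Uz,\overline{U_2}\rangle=W^{\natural}$ --- two lines, no choice of bases. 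Your route through $\bigwedge^3P\cap\bigwedge^3Q=\bigwedge^3(P\cap Q)$, the graph description of $D=\mathrm{Ann}(\Uz+(\overline{U_1}\cap\overline{U_2}))$ and the injectivity of the mixed-component map is correct but considerably heavier; the evaluation trick is worth internalizing.

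The step you single out as ``the main obstacle'' --- the reverse inclusion $\cN\subseteq\cF_0+\cF_1+\cF_2$, i.e.\ that the annihilator of $\langle u_0\wedge u_1\wedge u_2\rangle$ is no larger than the sum of the three obvious pieces --- you only sketch (coordinate elimination, or iterating the two-subspace identity), and as a self-contained argument your proposal is therefore incomplete: what you actually establish is only that the codimension is \emph{at least} $3\binom{3r-m-1-c}{3}$. You should be aware, however, that the paper's own proof stops at exactly the same point: it computes $\dim\langle\cF_0,\cF_1,\cF_2\rangle=3\dim\cF_i$ and tacitly identifies this span with the full annihilator without further comment. So you have not missed an idea that the paper supplies; rather, you have correctly located the one place where the published argument is itself thin, and if you carry out the elimination (or the iterated identity $(A\wedge\bigwedge^2V)\cap(B\wedge\bigwedge^2V)=A\wedge B\wedge V$ valid when $A+B=V$, followed by the intersection with $\Uz\wedge\bigwedge^2W^{\natural}$) you will have a more complete proof than the one in print.
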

\begin{proof}
As the projection of $\PG(W^*,q^t)$ on $\PG(U_0,q^t)$ spans
$\PG(U_0,q^t)$ we have $\dim \langle U_i,U_j \rangle \cap
W^*=m+1-r$; hence, $\dim \langle \Ui \Uj \rangle= 2r-m-1+r=3r-m-1=\dim
W^{\natural}$. Thus, $\langle \Ui \Uj \rangle=W^{\natural}$ for any $i\neq j$.
Let $\Omega_i$ be the Schubert variety of the $t$--subspaces with
non--trivial intersection with $\Ui$ and let $\cF_i$ the space
of the linear functions defined on $\bigwedge ^{t}W^{\natural}$ vanishing on
$\Omega_i$. By a slight abuse of notation,  identify the elements of
$\cF_i$ with the corresponding trilinear alternating maps
defined on $W^{\natural} \times W^{\natural} \times W^{\natural}$; the kernel of any element of
$\cF_i$ contains $\Ui$. Suppose $f_i+f_j=0$ with $f_i \in
\cF_i$, $f_j \in \cF_j$, $i \neq j$. Then, the kernel
of $f_i$ contains $\langle \Ui,\Uj \rangle=W^{\natural}$, so
$f_i=f_j=0$. Suppose now $f_0+f_1+f_2=0$, with $f_i \in \cF_i
\setminus\{0\}$ and $i=0,1,2$. For every $u_2 \in \overline{U_2}$,
$f_0(\cdot,\cdot,u_2)$ is a bilinear map vanishing on $\langle
\Uz,\overline{U_1} \rangle=W^{\natural}$; hence, it
is identically $0$ and the kernel of $f_0$
would  contain $\langle\Uz,\overline{U_2}\rangle=W^{\natural}$. This would
imply $f_0=0$, a contradiction. Hence $\dim \langle \cF_1,\cF_2,\cF_3\rangle=3\dim\cF_i$
\end{proof}

\begin{corollary}
Let $t=3$ and $r>2$. Suppose $\PG(W,q) \subset \PG(3r-1,q)$ to be the
$m$--subspace defining the linear set $\Lambda$. Let also
$\PG(W^*,q^3)$ be the
unique subspace of $\PG(3r-1,q^3)$ such that $\PG(W^*,q^3)\cap
\PG(3r-1,q)=\PG(W,q)$ and take $W^{\natural}$ such that $V(3r,q^3)=W^*\oplus W^{\natural}$.
Denote by
$\Ui$ the projection of $U_i$ on $W^{\natural}$ and write $c=\dim\Ui$.
Assume also $m+1 \leq 3r-3-c$.
Then, the image of $\Lambda$ is the complete intersection of
$\cV_{r,3}$ with a linear subspace of codimension
$\binom{3r-m-1}{3}-3\binom{3r-m-1-c}{3}$.
\end{corollary}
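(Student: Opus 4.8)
The plan is to read off the asserted codimension directly from the rank--nullity theorem applied to the restriction map $\phi'_/\colon \cF\to (U_0\otimes U_1\otimes U_2)'$ set up before Proposition \ref{projection2}. First I would recall that, over $\FF_{q^3}$, the image of $\Lambda$ corresponds to $\Omega^*\cap\Sigma$, and that a linear form in $\cF$ (a Schubert equation vanishing on $\Omega^*$) cuts a genuine hyperplane of the ambient space $\PG(r^3-1,q^3)=\PG(U_0\otimes U_1\otimes U_2,q^3)$ of $\cV_{r,3}$ precisely when it does \emph{not} lie in $\ker\phi'_/$: indeed $\Sigma$ spans $U_0\otimes U_1\otimes U_2$, while $\ker\phi'_/$ consists exactly of those forms of $\cF$ that vanish on the whole tensor space. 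Hence the number of independent linear conditions carving the image of $\Lambda$ out of $\cV_{r,3}$ equals $\dim\phi'_/(\cF)=\dim\cF-\dim\ker\phi'_/$.

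Next I would evaluate the two terms separately. By Proposition \ref{kernel}, applied with $n=3r$, $k=3$ and $A_1=W^*$ of dimension $h=m+1$ (the hypothesis $h\le n-k$ holding since $m+1\le 3r-3-c\le 3r-3$), one gets $\dim\cF=\binom{3r-m-1}{3}$. For the kernel, recall from the paragraph preceding Proposition \ref{projection2} that $\ker\phi'_/$ is isomorphic to the space of trilinear alternating forms on $W^{\natural}$ vanishing on every $\overline{u_0}\wedge \overline{u_1}\wedge \overline{u_2}$ with $\overline{u_i}\in\Ui$; equivalently, it is the annihilator in $(\bigwedge^3 W^{\natural})'$ of the subspace $\langle \overline{u_0}\wedge \overline{u_1}\wedge \overline{u_2},\ \overline{u_i}\in\Ui\rangle$. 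Therefore $\dim\ker\phi'_/$ equals the codimension of that subspace in $\bigwedge^3 W^{\natural}$, which Theorem \ref{t=3} --- applicable here because $r>2$ and $m+1\le 3r-3-c$ --- evaluates to $3\binom{3r-m-1-c}{3}$.

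Combining the two counts yields $\dim\phi'_/(\cF)=\binom{3r-m-1}{3}-3\binom{3r-m-1-c}{3}$, which is the claimed codimension. To conclude I would let $L$ be the common zero locus in $\PG(r^3-1,q^3)$ of a basis of $\phi'_/(\cF)$; then $\cV_{r,3}\cap L$ is exactly the image of $\Lambda$ and $\mathrm{codim}\,L=\dim\phi'_/(\cF)$, giving the complete intersection. The step I expect to be the real content --- rather than the final arithmetic, which is a formal consequence of Theorem \ref{t5}, Proposition \ref{kernel} and Theorem \ref{t=3} --- is the identification of the \emph{redundant} Schubert equations with $\ker\phi'_/$: one must verify that a form of $\cF$ vanishes on all of $\cV_{r,3}$ if and only if it vanishes on $\Sigma$ (so that no conditions beyond $\ker\phi'_/$ are discarded), and that the passage between the $\FF_q$--count on $\cV_{r,3}\cap\Omega$ and the $\FF_{q^3}$--count on $\Sigma\cap\Omega^*$, licensed by Lemma \ref{subspace}, preserves the dimension of the relevant space of equations.
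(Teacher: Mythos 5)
Your argument is correct and follows exactly the route the paper intends: the corollary is stated without a separate proof because it is the immediate combination, via rank--nullity for $\phi'_/$, of the count $\dim\cF=\binom{3r-m-1}{3}$ from Proposition \ref{kernel} with the identification of $\ker\phi'_/$ as the annihilator of $\langle \overline{u_0}\wedge\overline{u_1}\wedge\overline{u_2}\rangle$, whose dimension Theorem \ref{t=3} gives as $3\binom{3r-m-1-c}{3}$. The two verifications you flag at the end (forms vanishing on $\cV_{r,3}$ versus on $\Sigma$, and the $\FF_q$/$\FF_{q^3}$ descent) are precisely the points the paper settles beforehand via the spanning of the subgeometry and Lemma \ref{subspace}, so nothing further is needed.
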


When $t>3$ and $m+1 \leq 3r-3-\dim\Ui$, it is not possible, in
general, to provide a formula for the codimension of the image of a
linear set on $\cV_{rt}$ depending only on $m$, as  shown by the
following example.

In $\PG(5,q^4)$, take the linear set $\Lambda_1$ of rank $9$ given by
$\{(x,x^q,y,y^q,y^{q^2},z), x,y \in \FF_{q^4}, z \in \FF_q\}$. The
subspace $W_1$ of $\PG(23,q)$ defining $\Lambda_1$ is
\[ \{(x,x^q,y,y^q,y^{q^2},z,x^q,x^{q^2},y^q,y^{q^2},y^{q^3},z,x^{q^2},x^{q^3},y^{q^2},y^{q^3},y,z,x^{q^3},x,y^{q^3},y,y^q,z),
x,y \in \FF_{q^4}, z \in \FF_q\}; \]
hence, the subspace $W_1^*$ of
rank $9$ of $\PG(23,q^4)$ containing $W_1$ is
\[ \{(x_1,x_2,x_5,x_6,x_7, x_9, x_2,x_3,x_6,x_7,x_8,x_9,x_3,x_4,x_7,x_8,x_5,x_9,x_4,x_1,x_8,x_5,x_6,x_9),
x_i \in \FF_{q^4}\}. \]
A complement is
\[ W_1^{\natural}=\{(0,0,0,0,0,0,y_1,0,y_2,y_3,0,y_4,y_5,0,y_6,y_7,y_8,y_9,y_{10},y_{11},y_{12},y_{13},y_{14},y_{15}), y_i \in \FF_{q^4}\}. \]
Let $\Ui$ be the projection of
$U_i$ on $W_1^{\natural}$. By a straightforward calculation, we get $c=\dim
\Ui=6$, $\dim \overline{U_0}\cap \overline{U_1}=\overline{U_0}\cap
\overline{U_3}=1$ and $\overline{U_0}\cap \overline{U_2}=0$. Then, the
number of equations defining the image of $\Lambda_1$ on $\cV_{6,4}$
is
$\binom{rt-m-1}{t}-4\binom{rt-m-1-c}{t}+4\binom{rt-m-1-2c+1}{t}=865$.

Consider now the following linear set $\Lambda_2$ of the same rank:
$\{(x,y,y^q,z,z^q,z^{q^2}),x\in \FF_{q^2}, y \in \FF_{q^4}|\Tr(y)=0,z
\in \FF_{q^4}\}$, where $\Tr:\FF_{q^4}\rightarrow \FF_q$
is the trace function.
In $\PG(23,q)$, we have
\[ \{(x,y,y^q,z,z^q,z^{q^2},x^q,y^q,y^{q^2},z^q,z^{q^2},z^{q^3},x,y^{q^2},-y-y^q-y^{q^2},z^{q^2},z^{q^3},z,x^q,-y-y^q-y^{q^2},y,z^{q^3},z,z^q)\}; \]
 hence in $\PG(23,q^4)$ we get
$W_2^*=\{(x_1,x_3,x_4,x_6,x_7,x_8,x_2,x_4,x_5,x_7,x-8,x_9,x_1,x_5,-x_3-x_4-x_5,x_8,
x_9,x_6,x_2,-x_3-x_4-x_5,x_3,x_9,x_6,x_7),x_i\in \FF_{q^4}\}$.
 A complement is
\[ W_2^{\natural}=\{(0,0,0,0,0,0,0,y_1,0,y_2,y_3,0,y_4,y_5,y_6,y_7,y_8,y_9,y_{10},y_{11},y_{12},y_{13},y_{14},y_{15}),
y_i \in \FF_{q^4}\}. \]
 We see that $c=\dim \Ui=6$, $\dim
\overline{U_0}\cap \overline{U_1}=\overline{U_0}\cap \overline{U_3}=0$
and $\overline{U_0}\cap \overline{U_2}=1$. Thus, the number of equation
defining the image of $\Lambda_2$ on $\cV_{6,4}$ is
$\binom{rt-m-1}{t}-4\binom{rt-m-1-c}{t}+2\binom{rt-m-1-2c+1}{t}=863\neq865$.

{\bf Remark}. Even if it is not possible to provide a formula for the
codimension of the image of a linear set on $\cV_{rt}$ depending only
on $m$ for $t>3$ and $m+1 \leq 3r-3-\dim\Ui$, the above arguments show
a possible way to actually determine its value on a case--by--case basis,
as this codimension is, in general, the same as $\dim \langle u_0
\wedge u_1 \wedge \ldots \wedge u_{t-1}, u_i \in
\overline{U_i}\rangle$.

\vspace{5pt}
\noindent
 Addresses of the authors:
\vskip.1cm 
\centerline{
\begin{tabular}{l@{\qquad}l}
\begin{minipage}[t]{7cm}
{\bfseries Valentina Pepe}\\[.1cm]
Dipartimento di Scienze di Base \\
\hbox to 1cm{ } e Applicate per l'Ingegneria\\[.1cm]
La Sapienza University of Rome\\[.1cm]
Via Scarpa 16\\
I-00161 Roma \emph{(Italy)}\\[.1cm]
{\tt valepepe@sbai.uniroma1.it}
\end{minipage} &
\begin{minipage}[t]{7cm}
{\bfseries Luca Giuzzi}\\[.1cm]
DICATAM \\
 Section of Mathematics \\[.1cm]
Universit\`a degli Studi di Brescia\\[.1cm]
Via Branze 43\\
I-25123 Brescia \emph{(Italy)}\\[.1cm]
{\tt luca.giuzzi@unibs.it}
\end{minipage}
\end{tabular}
}
\end{document}